\theoremstyle{plain}
\newtheorem{theorem}{Theorem}[section]
\newtheorem{lemma}[theorem]{Lemma}
\newtheorem{proposition}[theorem]{Proposition}
\theoremstyle{definition}
\newtheorem{assumption}[theorem]{Assumption}
\newtheorem{definition}[theorem]{Definition}
\theoremstyle{remark}
\newtheorem{remark}{Remark}
\newtheorem{example}{Example}
\newcommand\R{\mathbb{R}}
\newcommand\M{\mathbb{M}}
\newcommand\N{\mathbb{N}}
\newcommand\D{D}
\renewcommand\boldsymbol{}
\newcommand\calW{\mathcal{W}}
\newcommand{\SO}[1]{\operatorname{SO}(#1)}
\newcommand\id{Id}
\newcommand\sym{\operatorname{sym}}
\newcommand\esssup{\mathop{\operatorname{ess\,sup}}}
 \newcommand\dist{\operatorname{dist}}
\newcommand{\e}{\varepsilon}
\newcommand\eps{\varepsilon}
\newcommand\diver{\mathop{\operatorname{div}}}
\newcommand\argmin{\operatorname{argmin}}
\DeclareMathOperator{\curl}{curl}
\title{Homogenization of bending theory for plates; the case of elastic laminates}
\date{\today}
\author{Maroje Marohni\'{c},\\
University of Zagreb, Faculty of Natural Sciences and Mathematics,\\ Department of Mathematics,\\ Bijeni\v{c}ka 30, 10000 Zagreb, Croatia,\\
maroje.marohnic@math.hr, \\  \\
Igor Vel\v{c}i\'{c},\\
University of Zagreb, Faculty of Electrical Engineering and Computing,\\ Unska 3, 10000 Zagreb, Croatia,\\
igor.velcic@fer.hr}
\begin{document}

\maketitle

\begin{abstract}
In this paper we study the homogenization effects on the model of elastic plate in the bending regime, under the assumption that the  energy density (material) oscillates in the direction of thickness.
We study two different cases. First, we show, starting from 3D elasticity, by means of $\Gamma$-convergence
and under general (not necessarily periodic) assumption, that the effective behavior of
the limit is not influenced by  oscillations in the direction of thickness. In the second case, we study  periodic in-plane oscillations of the energy density coupled with periodic oscillations
in the direction of thickness. In contrast to the first case we show that there are
homogenization effects coming also from the oscillations in the direction of thickness.

\end{abstract}

\vspace{10pt}

 \noindent {\bf Keywords:}
 elasticity, dimension reduction, homogenization, bending plate theory. \\
\noindent {\bf AMS Subject Classification:}
35B27, 49J45, 74K20, 74E30, 74Q05, 76M45, 76M50.


\section{Introduction} \label{sectionprvi}

There is vast literature on the derivation of plate models from $3D$ elasticity by means of  $\Gamma$-convergence. The first work in this direction was \cite{LDRa95} where the authors derived the plate model in the membrane regime. It  is well known that different models of thin structures can be obtained depending on  the relation between the
magnitude of external loads (i.e., the energy) and the thickness of the body $h$.  The first work in deriving higher ordered models of plates was \cite{FJM-02} where the authors derived the plate model in the bending regime (order of the energy $h^2$, after dividing with the order of volume $h$). The key mathematical  ingredient was the theorem on geometric rigidity, and  by using this theorem the authors in \cite{FJM-06} also derived the models in the regimes higher in hierarchy than bending (von K\'arman with the order of the energy $h^4$, constrained models with the order between $h^4$ and $h^2$).  We also mention the work \cite{Schmidt-07}, relevant for this paper, where the author derived the plate model in the bending regime, under the assumption that the material is layered in the direction of thickness, but with fixed energy density.

In \cite{Braides-Fonseca-Francfort-00} the authors study the effects of simultaneous homogenization and dimensional reduction, by variational techniques, on the model of membrane plate. Their approach is also general and allow non-periodic oscillations. Moreover,  the boundary can also be oscillating.

 The influence of the effects of simultaneous homogenization and dimensional reduction on limit equations is studied in different contexts (see \cite{Arrieta96} for the Laplace equation on thin domain with periodically oscillating boundary,   \cite{JurakTutek} for the model of homogenized rod in the context of linear elasticity where the material periodically oscillates in the direction of thickness, \cite{Courilleau04} for nonlinear monotone operators without periodicity assumptions on the coefficients). In \cite{gustafsson06} the authors obtained the limit equations for the linear plate, where   the material oscillates in the direction of thickness (this is done without the periodicity assumption from $3D$ linear elasticity equations by combining $H$-convergence and dimension reduction techniques).

The effects of simultaneous homogenization and dimensional reduction on the  models of plates, shells and rods in different (nonlinear) higher ordered regimes  have recently been obtained. In \cite{Neukamm-11} the author derived the limit equations for the rod in the bending regime by combining two-scale convergence techniques together with the dimensional reduction techniques from \cite{FJM-02}.  Different models were obtained depending on the limit of the ratio between the periodicity of the oscillations  and the thickness  $h$.
 The oscillations of the material were assumed to be periodic along the central line of the rod.
 Together with \cite{Velcic-12} this is the first work where the higher ordered nonlinear models were derived combining the effects of homogenization and dimensional reduction.
 In \cite{NeuVel-12} the authors derived the plate model in von K\'arm\'an regime, assuming periodic in-plane oscillations of the material. In \cite{Horneuvel12} and \cite{Vel13} the authors derived the plate models in the bending regime   again depending  on the relation between the periodicity of  oscillations  and the thickness  $h$. The models of homogenized von K\'arm\'an shell are discussed in \cite{Hornungvel12}.
 In \cite{NeuOlb-13} the authors homogenize  the bending plate model ($2D$) with periodic oscillations. What is interesting here  is that the homogenization is done under the constraint of being an isometry.
 In \cite{Velcic-13} and \cite{MarVel-14} the authors study the effects of simultaneous homogenization and dimensional  reduction  for the plate model in von K\'arm\'an regime and the rod in the bending regime  without any assumptions on the periodicity or  the direction of the oscillations. These works thus generalize  \cite{NeuVel-12}  and \cite{Neukamm-11} respectively. The  derivation of the general plate model in the bending regime   without periodicity assumption has not been done, since the models, even in some periodic regimes, are unknown (see the discussion in  \cite{Vel13}  and also the model obtained in \cite{NeuOlb-13}).

 In this paper we study the effects of simultaneous homogenization and dimensional reduction to obtain the model of the plate in the bending regime. Unlike in \cite{Horneuvel12,Vel13} we suppose that the material  changes in the direction of thickness and we do not work under the periodicity assumption in the case when the material does not change in the in-plane directions rapidly. We show that the limit model in this case is simple, i.e., there are no homogenization effects (see Remark \ref{napomena1}). In the case when we couple periodic in-plane oscillations of the material together with the periodic oscillations in the direction of thickness, the effect of oscillations in the direction of thickness on the limit model is nontrivial and these effects are coupled together with the effects of oscillations in the in-plane direction and can not be separated (see Remark \ref{napomena2}). This is useful information for the structural optimization.

\subsection{Notation}
\begin{itemize}
\item $x'=(x_1,x_2)$, $x=(x_1,x_2,x_3)$
\item  $\nabla_h y:= (\nabla'y,\frac{1}{h}\partial_3{y} )$  is a scaled gradient and $\nabla'y = (\partial_1{y},\partial_2{y})$
\item $\mathbb{S}_n$ space of symmetric matrices of order $n$
\item weak $\rightharpoonup$ and strong $\to$ convergence
\item $\bar{y}(x')=\int_{I} y(x',x_3) dx_3$, $I=[-\tfrac{1}{2},\tfrac{1}{2}]$.
\item by $e_1, \dots,e_n$ we denote the vectors of the canonical base in $\R^n$.
\item we suppose that the greek indices $\alpha,\beta$ take the values in the set $\{1,2\}$, while the latin indices $i,j,k$  take the values in the set $\{1,2,3\}$, unless otherwise stated.

\item $\iota:\R^{2 \times 2} \to \R^{3 \times 3}$ denotes the natural embedding
$$ \iota(A)=\sum_{\alpha,\beta=1,2} A_{\alpha \beta} e_{\alpha} \otimes e_{\beta}.$$
\item by $A^t$ we denote the transpose of the matrix $A$.
\item by $\id$ we denote the identity matrix.
\end{itemize}

\section{Derivation of the model} \label{sectiondrugi}

Let $S \subset \R^2$ be an open connected set with Lipschitz boundary and also piecewise $C^1$. The property piecewise $C^1$ is necessary only for proving the upper bound.
We define by $\Omega^h =S \times\left(-\frac{h}{2},\frac{h}{2} \right)$ the reference configuration of the plate-like body. When $h=1$ we omit the superscript and simply write $\Omega=\Omega^1$.

For every $h>0$ we define the energy functional of elastic energy on a canonical domain $\Omega$
\begin{equation*}
I^h(y^h) := \int_{\Omega} W^h(x_3,\nabla_h y^h) dx,
\end{equation*}
where $W^h: I \times \R^{3 \times 3} \to[0,+\infty]$ denotes the stored energy density with the properties given below. We assume that we are in the bending regime, i.e., that there is a positive constant $C>0$, independent of $h$, such that:
\begin{equation}\label{bendingregime}
 I^h(y^h) \leq C h^2.
\end{equation}

\subsection{General framework}
The following two definitions will give conditions on the energy densities.
\begin{definition}[Nonlinear material law]\label{def:materialclass}
  Let $\eta_1,\eta_2,\rho$ be positive real constants such that
  $\eta_1\leq\eta_2$. We denote by  $\calW(\eta_1,\eta_2,\rho)$ the class of all measurable functions $W:\R^{3 \times 3}\to[0,+\infty]$ satisfying the following properties:
  \begin{align}
    \tag{W1}\label{ass:frame-indifference}
    &\text{frame-indifference:}\\
    &\notag\qquad W(R \boldsymbol F)=W(\boldsymbol F)\quad\text{ for
      all $\boldsymbol F\in\M^3$ and  $\boldsymbol R\in\SO
      3$;}\\
    \tag{W2}\label{ass:non-degenerate}
    &\text{non degeneracy:}\\
    &\notag\qquad W(\boldsymbol F)\geq \eta_1 \dist^2(\boldsymbol F,\SO 3)\quad\text{ for all
      $\boldsymbol F\in\M^3$,}\\
    &\notag\qquad W(\boldsymbol F)\leq \eta_2\dist^2(\boldsymbol F,\SO 3)\quad\text{ for all
      $\boldsymbol F\in\M^3$ with $\dist^2(\boldsymbol F,\SO 3)\leq\rho$;}\\
    \tag{W3}\label{ass:stressfree}
    &W\text{ is minimal at $\id$:}\\
    &\notag\qquad W(\id)=0;\\
    \tag{W4}\label{ass:expansion}
    &\text{W admits a quadratic expansion at $\id$:}\\
    &\notag\qquad W(\id+\boldsymbol G)=Q(\boldsymbol G)+o(|\boldsymbol G|^2),\qquad\text{for all }\boldsymbol G\in\M^3,\\
    &\notag\text{where $Q\,:\,\M^3\to\R$ is a quadratic form.}
 \end{align}
\end{definition}
\begin{definition}[Admissible composite material]\label{def:composite}
  Let $0<\eta_1\leq\eta_2$ and $\rho>0$. A family $(W^h)_{h>0}$
  \begin{equation*}
    W^h:I \times \R^{3 \times 3} \to [0,+\infty],
  \end{equation*}
  describes an admissible composite material of class $\mathcal W(\eta_1,\eta_2,\rho)$ if
  \begin{enumerate}[(i)]
  \item  For each $h>0$, $W^h$ is almost surely equal to a Borel function on $I\times\R^{3\times 3}$;
  \item $W^h(x_3,\cdot)\in\mathcal W(\eta_1,\eta_2,\rho)$ for  every $h>0$ and almost every $x\in\Omega$;
  \item there exists a monotone function $r:\R^+\to [0,+\infty]$, such that $\lim_{\delta \to 0} r(\delta)= 0$ and
  \begin{equation}\label{assumQh}
   (\forall h>0) \ (\forall \boldsymbol G\in\R^{3\times 3}) \quad \esssup_{x_3\in I} |W^h(x_3,\id+\boldsymbol G)-Q^h(x_3,\boldsymbol G)|\leq r(|\boldsymbol G|) |\boldsymbol G|^2,
  \end{equation}
where the family of quadratic forms $Q^h(x_3,\cdot)$ is as  in Definition \ref{def:materialclass}.
\end{enumerate}
\end{definition}

Note that  $Q^h$ inherits the measurability properties of $W^h$ since, for each $h>0$, it  can be written as the pointwise limit
\begin{equation} \label{defQ} (x_3,G) \to Q^h(x_3,G) := \lim_{\e \to 0}
\tfrac{1}{\e^2} W^h(x, Id+\e G).
\end{equation}

It is straightforward to establish the following properties of $Q^h$ from (W2).
\begin{lemma}
  \label{lem:111}
Suppose that $(W^h)_{h>0}$ describes an admissible composite material in the sense of the Definition~\ref{def:composite} and let $(Q^h)_{h>0}$ be the family of the quadratic forms
associated to $(W^h)_{h>0}$ through the expansion \eqref{ass:expansion}. Then the family $(Q^h)_{h>0}$ satisfies the following properties
  \begin{enumerate}[(i)]
  \item[(Q1)] for all $h>0$ and almost all $x_3\in I $ the map $Q^h(x_3,\cdot)$ is quadratic and satisfies
    \begin{equation*}
      \eta_1|\sym \boldsymbol G|^2\leq Q^h (x_3,\boldsymbol G)=Q^h(x_3,\sym \boldsymbol G)\leq \eta_2|\sym \boldsymbol G|^2,\qquad\text{for all $ \boldsymbol G\in\R^{3 \times 3}$.}
    \end{equation*}
  \item[(Q2)]
For all $h>0$ and almost all  $x_3 \in I$ the inequality
   \begin{equation*}
|Q^h(x_3,G_1)-Q^h(x_3,G_2)| \leq \eta_2 |\sym G_1-\sym G_2|\cdot |\sym G_1+\sym G_2|,
\end{equation*}
holds for all $ \boldsymbol G_1, \boldsymbol G_2 \in\R^{3 \times 3}$.
  \end{enumerate}
\end{lemma}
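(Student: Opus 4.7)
The plan is to deduce both (Q1) and (Q2) directly from the pointwise characterization
\[
Q^h(x_3,G) = \lim_{\e\to 0}\frac{W^h(x_3,\id+\e G)}{\e^2}
\]
appearing in \eqref{defQ}, together with the structural properties (W1)–(W4) and the uniform remainder estimate \eqref{assumQh}.

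For (Q1), quadraticity of $Q^h(x_3,\cdot)$ is immediate from (W4), which already exhibits the limit as a quadratic form; measurability in $x_3$ follows since $Q^h$ is a pointwise limit of measurable functions. The two-sided bound $\eta_1|\sym G|^2\le Q^h(x_3,G)\le\eta_2|\sym G|^2$ comes from squeezing $\e^{-2}W^h(x_3,\id+\e G)$ between the two sides of (W2) (the upper estimate applies once $\e$ is small enough that $\dist^2(\id+\e G,\SO 3)\le\rho$) and using the elementary expansion $\dist(\id+\e G,\SO 3)=\e|\sym G|+o(\e)$ before passing to the limit. The only genuinely delicate point is the identity $Q^h(x_3,G)=Q^h(x_3,\sym G)$, which I would establish via the polar decomposition $\id+\e G=R_\e U_\e$ with $R_\e\in\SO 3$ and $U_\e=\id+\e\sym G+O(\e^2)$, invoking frame-indifference (W1) to replace the argument of $W^h$ by $U_\e$ and then expanding via \eqref{assumQh}; bilinearity of $Q^h$ together with the $O(\e^2)$ correction inside $U_\e$ shows that the skew and higher-order contributions vanish in the limit.

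For (Q2), let $B^h(x_3,\cdot,\cdot)$ denote the symmetric bilinear form polarizing $Q^h(x_3,\cdot)$, so that the algebraic identity
\[
Q^h(x_3,G_1)-Q^h(x_3,G_2)=B^h(x_3,G_1-G_2,\,G_1+G_2)
\]
holds. Since $Q^h(x_3,\cdot)=Q^h(x_3,\sym\cdot)$ by (Q1), the bilinear form factors through symmetric parts as $B^h(x_3,A,C)=B^h(x_3,\sym A,\sym C)$; applying the Cauchy–Schwarz inequality to the positive semidefinite form $\eta_2|\sym\cdot|^2-Q^h(x_3,\cdot)$ — equivalently, polarizing the upper bound in (Q1) — gives $|B^h(x_3,A,C)|\le\eta_2|\sym A|\,|\sym C|$. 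Substituting $A=G_1-G_2$ and $C=G_1+G_2$ yields the claim.

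The main obstacle is the symmetry assertion in (Q1): one must propagate the remainder $o(\e^2)$ uniformly in $x_3\in I$ through the polar decomposition, and this is precisely what the essential-supremum formulation of \eqref{assumQh} is designed to allow. Once that identity is in hand, everything remaining is routine linear algebra.
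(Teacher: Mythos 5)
Your overall approach is correct and fills in the details that the paper dismisses as ``straightforward'' (the paper gives no proof of Lemma~\ref{lem:111}, asserting it follows from (W2); in fact one also needs (W1) for the symmetry $Q^h(x_3,G)=Q^h(x_3,\sym G)$, which you correctly isolate). The squeeze argument for the two-sided bound, the polar-decomposition argument via (W1) for factoring through $\sym G$, and the polarization identity for (Q2) are all the right ingredients, and the uniformity in $x_3$ is indeed supplied by the essential-supremum form of \eqref{assumQh}.

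One small slip in (Q2): you propose applying Cauchy--Schwarz to the positive semidefinite form $P:=\eta_2|\sym\cdot|^2-Q^h(x_3,\cdot)$, ``equivalently, polarizing the upper bound.'' That does not yield the stated constant. Writing $B_P$ for the polarization of $P$, Cauchy--Schwarz gives $|B_P(A,C)|\le\sqrt{P(A)P(C)}\le\eta_2|\sym A|\,|\sym C|$, and then from $B^h(x_3,A,C)=\eta_2\langle\sym A,\sym C\rangle-B_P(x_3,A,C)$ you only conclude $|B^h(x_3,A,C)|\le 2\eta_2|\sym A|\,|\sym C|$, a loss of a factor of $2$. The clean route is to apply Cauchy--Schwarz directly to $Q^h(x_3,\cdot)$, which is positive semidefinite by the lower bound in (Q1):
\begin{equation*}
|B^h(x_3,A,C)|\le\sqrt{Q^h(x_3,A)\,Q^h(x_3,C)}\le\eta_2|\sym A|\,|\sym C|.
\end{equation*}
Substituting $A=G_1-G_2$ and $C=G_1+G_2$ in the polarization identity $Q^h(x_3,G_1)-Q^h(x_3,G_2)=B^h(x_3,G_1-G_2,G_1+G_2)$ then gives exactly (Q2). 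With this one correction, the argument is complete.
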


For each $h>0$ we define quadratic forms $Q_2^h:I \times\mathbb{R}^{2 \times 2} \times \mathbb{R}^{2 \times 2} \to \R$  by minimizing with respect to the third row and column:
\begin{equation}\label{defQ2}
Q^h_2(x_3,A,B)=\min_{d \in \mathbb{R}^3} Q^h(x_3,\iota(B+x_3A)+ d \otimes e_3 + e_3 \otimes d).
\end{equation}

Since by (Q1) the coefficients of $Q_2^h$ are uniformly bounded in $L^{\infty}(S)$
 we deduce that there is a subsequence $h_k$  such that the coefficients of $ Q_2^{h_k}$ converge in weak * topology in $L^{\infty}(I)$. We extend this property to whole sequence $(Q^h)_{h>0}$ by making the following assumption.
\begin{assumption} \label{ass:main}
We suppose that the coefficients of the quadratic forms $Q_2^h$ weakly converge as $h$ goes to zero.
\end{assumption}
By $Q_2$ we denote the quadratic form whose coefficients are weak * limits of the coefficients of the sequence $(Q_2^h)_{h>0}$.
We define the quadratic form $Q_0:\R^{2 \times 2} \to \R$ by
\begin{equation}\label{defqav}
Q_{0}(A)= \min_{B \in \mathbb{S}_2} \int_{I}  Q_2(x_3,A,B) dx_3.
\end{equation}

From (Q1) and (\ref{defQ2}) we easily establish that $Q_{0}$ is also a quadratic form satisfying the following
\begin{equation*}
\frac{\eta_1}{12}|\sym A |^2\leq Q_0 (A)=Q_0(\sym A) \leq \frac{\eta_2}{12}|\sym A|^2  \quad  \mbox{for all} \quad  A \in \mathbb{S}_{2}.
\end{equation*}
We  define the set of Sobolev  isommetries $$W^{2,2}_{iso}(S)=\left\{y \in W^{2,2}(S;\mathbb{R}^3): \partial_{\alpha}{y} \cdot \partial_{\beta}{y}=\delta_{\alpha\beta} \mbox{ a.e. in } S \right\},$$
and the limit functional $I_0:W^{2,2}_{iso}(S) \to \R$
$$I_0(y)= \int_S Q_{0}(II(x')) dx',$$
where $II(x')= (\nabla y)^T \nabla n$ is a second fundamental form and $n$ is a unit outer normal.

\subsection{Theorem on Geometric Rigidity and convergence of sequences with finite bending energy}

We will need the following lemmas.

\begin{lemma}[\cite{FJM-06}, Theorem 6]\label{lemaconv}
For any $y \in H^1(\Omega;\mathbb{R}^3)$ there are  maps $R:S \to SO(3)$ and $\tilde{R} \in H^1(S;\mathbb{R}^{3 \times 3})$ such that
$$\Vert \nabla_h y -R\Vert_{L^2(\Omega)}^2 + \Vert R - \tilde{R} \Vert_{L^2(\Omega)}^2 + h^2 \Vert \nabla \tilde{R} \Vert^2_{L^2(S)} \leq C\Vert \dist{(\nabla_h y,SO(3)) }\Vert^2_{L^2(\Omega)}.$$

\end{lemma}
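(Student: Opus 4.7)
The plan is to adapt the local patching argument of Friesecke, James and M\"uller: tile the mid-surface $S$ by a regular grid of squares of side $h$, apply the quantitative geometric rigidity theorem of \cite{FJM-02} on each resulting cylinder of physical size $h\times h\times h$, and then assemble the local rotations into a piecewise constant $R$ together with its $h$-scale smoothing $\tilde R$.

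\textbf{Step 1 (Local rigidity on cubes of side $h$).} Let $\{Q_\alpha\}$ be the squares of a Cartesian grid of size $h$ meeting $S$, and put $C_\alpha^h:=Q_\alpha\times(-h/2,h/2)$. Undoing the vertical rescaling via $\tilde y(x',z):=y(x',z/h)$ identifies $\nabla_h y$ with the ordinary gradient of $\tilde y$ under $z=hx_3$. Since each $C_\alpha^h$ is a rigid rescaling of the unit cube, the geometric rigidity theorem produces $R_\alpha\in \SO{3}$ with
\[
\int_{C_\alpha^h} |\nabla \tilde y-R_\alpha|^2 \leq C \int_{C_\alpha^h} \dist^2(\nabla \tilde y,\SO{3}),
\]
where $C$ depends only on the unit cube. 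Changing variables back to $\Omega$ (the factors of $h$ cancel), defining $R(x'):=R_\alpha$ on $Q_\alpha$, and summing in $\alpha$ yields the first piece of the bound, $\|\nabla_h y-R\|_{L^2(\Omega)}^2 \leq C \|\dist(\nabla_h y,\SO{3})\|_{L^2(\Omega)}^2$.

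\textbf{Step 2 (Two-cube comparison and smoothing).} For any pair of neighbouring cells $Q_\alpha,Q_\beta$ apply rigidity on the union $(Q_\alpha\cup Q_\beta)\times(-h/2,h/2)$ to obtain a common rotation $R_{\alpha\beta}$; by the triangle inequality and Step~1, tracking the $h^{-2}$ factor coming from the $L^2$-averaging on a patch of area $\sim h^2$, one finds
\[
|R_\alpha-R_\beta|^2 \leq \frac{C}{h^2} \int_{(Q_\alpha\cup Q_\beta)\times I} \dist^2(\nabla_h y,\SO{3})\,dx.
\]
Now let $\tilde R:S\to\R^{3\times 3}$ be the piecewise affine interpolant between cell values on a simplicial refinement of the grid (equivalently, a convolution of $R$ with a smooth kernel at scale $h$); note that one drops the $\SO{3}$ constraint. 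Standard interpolation/mollification estimates with the finite-overlap property give
\[
\|R-\tilde R\|_{L^2(S)}^2 \leq C\sum_{\alpha\sim\beta} h^2|R_\alpha-R_\beta|^2, \qquad \|\nabla\tilde R\|_{L^2(S)}^2 \leq C\sum_{\alpha\sim\beta} |R_\alpha-R_\beta|^2,
\]
with the sums taken over adjacent pairs. Inserting the previous bound and exploiting $|I|=1$ to lift the $S$-estimates to $\Omega$ yields the two remaining terms of the asserted inequality.

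\textbf{Main obstacle.} The technical subtlety is the treatment of cells that straddle $\partial S$: the rigidity constant must remain bounded even though such patches are no longer full cubes. Since $S$ is only Lipschitz (piecewise $C^1$), these boundary patches have to be replaced by uniformly bi-Lipschitz enlargements, i.e.\ John domains of $h$-independent distortion, for which the rigidity constant of \cite{FJM-02} is still controlled. Once this uniform cover near $\partial S$ is set up, the summation arguments of Steps~1 and~2 apply verbatim and close the estimate.
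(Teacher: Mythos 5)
Your proposal reproduces the standard Friesecke--James--M\"uller argument for the geometric-rigidity estimate on thin domains, which is exactly what the paper cites here (the lemma is invoked as \cite{FJM-06}, Theorem 6, without reproving it). The tiling of $S$ at scale $h$, local application of rigidity on $h\times h\times h$ cylinders after undoing the $x_3$-scaling, the two-cell comparison giving $|R_\alpha-R_\beta|^2\le C h^{-2}\int\dist^2(\nabla_h y,\SO3)$, and the piecewise-affine (or mollified) interpolant $\tilde R$ with the resulting $L^2$ and gradient bounds are all as in the original source, and the scaling bookkeeping checks out (with $|I|=1$ lifting the $S$-integrals to $\Omega$). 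The point you flag as the ``main obstacle''---uniformity of the rigidity constant for boundary cells---is indeed where the real work lies in \cite{FJM-02,FJM-06}; there it is handled via the Lipschitz regularity of $\partial S$, producing a cover by uniformly bi-Lipschitz images of reference domains, which is the same mechanism you sketch with John domains. So this is the same approach, left at the level of detail one would expect when citing an established result.
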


\begin{lemma}[\cite{FJM-02}, Theorem 4.1]\label{lemastrong}
Suppose that a sequence $(y^h)_{h>0} \subset  H^1(\Omega;\mathbb{R}^3)$ is such that (\ref{bendingregime}) holds. Then there exists $y \in W^{2,2}_{iso}(S)$
such that on a subsequence, not relabeled
\begin{equation}
y^h - \frac{1}{\vert \Omega \vert } \int_{\Omega} y^h dx \to y \mbox{ strongly in } L^2(\Omega;\mathbb{R}^{3})  \label{jakayh}.
\end{equation}
We also have
\begin{equation}
\nabla_h y^h  \to (\nabla'y,n) \mbox{ strongly in } L^2(\Omega;\mathbb{R}^{3 \times 3}). \label{jakagradijenti}
\end{equation}

\end{lemma}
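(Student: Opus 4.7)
The plan is to combine the quantitative geometric rigidity of Lemma~\ref{lemaconv} with the bending energy bound \eqref{bendingregime}, extract a limit rotation field, and identify it as the gradient of a $W^{2,2}$ isometric immersion. First I would apply Lemma~\ref{lemaconv} to each $y^h$ to obtain maps $R^h:S\to \SO{3}$ and $\tilde R^h\in H^1(S;\R^{3\times 3})$. The hypothesis \eqref{bendingregime} together with the non-degeneracy \eqref{ass:non-degenerate} yields $\|\dist(\nabla_h y^h,\SO{3})\|_{L^2(\Omega)}\leq Ch$, so the rigidity estimate gives
$$\|\nabla_h y^h - \tilde R^h\|_{L^2(\Omega)}\leq Ch,\qquad \|\tilde R^h\|_{H^1(S)}\leq C.$$
Rellich--Kondrachov then produces a subsequence with $\tilde R^h\to \tilde R$ strongly in $L^2(S)$ and weakly in $H^1(S)$. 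Because $R^h\in\SO{3}$ almost everywhere and $R^h-\tilde R^h\to 0$ in $L^2(S)$, the pointwise a.e.\ limit lies in $\SO{3}$, so $\tilde R\in H^1(S;\SO{3})$.

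Next I would upgrade to the level of the deformations. The two displayed estimates combined with the strong $L^2$ convergence of $\tilde R^h$ (viewed as $x_3$-independent on $\Omega$) give $\nabla_h y^h\to \tilde R$ strongly in $L^2(\Omega;\R^{3\times 3})$. Reading off the third column, $h^{-1}\partial_3 y^h\to \tilde R_{\cdot 3}$ in $L^2$, so $\partial_3 y^h\to 0$ in $L^2(\Omega)$. A Poincar\'e inequality in the $x_3$ direction then gives $\|y^h-\bar y^h\|_{L^2(\Omega)}\to 0$. On the in-plane side, Fubini produces $\nabla'\bar y^h\to(\tilde R_{\cdot 1},\tilde R_{\cdot 2})$ in $L^2(S)$, and Poincar\'e--Wirtinger provides $y\in H^1(S;\R^3)$ with $\partial_\alpha y=\tilde R_{\cdot \alpha}$ such that $\bar y^h - \tfrac{1}{|\Omega|}\int_\Omega y^h\to y$ in $L^2(S)$; together with $\|y^h-\bar y^h\|_{L^2}\to 0$ this delivers \eqref{jakayh}.

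It remains to identify the limit. Since $\tilde R\in \SO{3}$ a.e., its first two columns are orthonormal, so $\partial_\alpha y\cdot \partial_\beta y=\delta_{\alpha\beta}$; coupled with $\nabla' y\in H^1(S)$ this places $y$ in $W^{2,2}_{iso}(S)$. The third column is forced to be the cross product $\tilde R_{\cdot 3}=\partial_1 y\times \partial_2 y = n$, which is exactly the content of \eqref{jakagradijenti}.

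The step I expect to be the main obstacle is producing a limit rotation field with enough regularity to identify $y$ as a $W^{2,2}$ isometry (so that the second fundamental form $II$ used in the limit functional $I_0$ is even well defined). This is precisely what the quantitative estimate of Lemma~\ref{lemaconv} is designed for: the rescaled bound $h^2\|\nabla\tilde R^h\|_{L^2(S)}^2\leq C h^2$ converts a purely $L^2$ rigidity statement into $H^1$ compactness for $\tilde R^h$. Without this $H^1$ control one could only recover a Lipschitz limit $y$ with gradient in $\SO{3}$, with no access to $W^{2,2}$ regularity, and the identification of the second fundamental form as the relevant limit object would be lost.
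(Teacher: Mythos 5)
Your argument correctly reconstructs the standard Friesecke--James--M\"uller compactness proof: the bending energy bound combined with \eqref{ass:non-degenerate} gives $\|\dist(\nabla_h y^h,\SO 3)\|_{L^2}\le Ch$, the quantitative rigidity estimate of Lemma~\ref{lemaconv} then yields an $H^1$-bounded rotation field $\tilde R^h$, Rellich--Kondrachov plus pointwise closedness of $\SO 3$ produces $\tilde R\in H^1(S;\SO 3)$, and Poincar\'e/Poincar\'e--Wirtinger in the $x_3$ and in-plane directions respectively identify the limit deformation and its gradient. This is precisely the route taken in the cited reference (the paper itself gives no independent proof), with the minor presentational difference that you invoke the already-packaged estimate from \cite{FJM-06} rather than reproving the multiscale rigidity argument of \cite{FJM-02} directly; the content and all the key steps are the same.
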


\subsection{Lower bound}
Before we prove the lower bound we will need the following claim.
\begin{lemma}(\cite{BoceaFon02,BraidesZeppieri07})\label{drugavazna}
Let $(w^h)_{h>0}$ be a sequence
bounded in $W^{1,p}(\Omega;\R^m)$, where $p>1$, and let us additionally assume that the sequence
$(\|\nabla_h w^h\|_{L^p})_{h>0}$ is bounded.
Then for every sequence $(w^{h_n})_{n \in \N}$ there exists a  subsequence $(w^{h_{n(k)}})_{k \in \N}$  such that  for every $k \in \N$ there exists $z_{k} \in W^{1,p}(\Omega;\R^m)$  which satisfies
\begin{enumerate}[(i)]
\item
$ \lim_{k \to \infty} |\Omega  \cap \{ z_{k} \neq w^{h_{n(k)}} \textrm{ or } \nabla z_{k} \neq  \nabla w^{h_{n(k)}} \}|=0. $
\item  $(|\nabla_{h_{n(k)}} z_{k}|^p)_{k \in \N}$ is equi-integrable.
\end{enumerate}
\end{lemma}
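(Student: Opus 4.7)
The claim is the scaled-gradient analogue of the classical Fonseca-Müller-Pedregal equi-integrable decomposition, and the plan is to repeat its Lipschitz-truncation proof in the anisotropic metric $d_h(x,y):=\max\{|x'-y'|,\,h^{-1}|x_3-y_3|\}$, under which $\nabla_h$ plays the role of the ordinary gradient. Equivalently, the affine change of variables $y_3:=x_3/h$ identifies $\Omega$ with the slab $S\times(-\tfrac{1}{2h},\tfrac{1}{2h})$ and sends $\nabla_h w^h$ to the full gradient of the rescaled map, so in principle one may just invoke the classical FMP lemma on the rescaled sequence and pull the resulting decomposition back.

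\step{1} Introduce the Hardy-Littlewood maximal operator $M_h$ associated with $d_h$-balls; since these are standard Euclidean balls after the rescaling above, $M_h$ obeys the weak-$(1,1)$ and strong-$(p,p)$ bounds with $h$-independent constants. For $\lambda>0$ define
\begin{equation*}
E^h_\lambda:=\{x\in\Omega:\,M_h(|w^h|^p+|\nabla_h w^h|^p)(x)>\lambda^p\};
\end{equation*}
the weak-$(1,1)$ inequality together with the assumed $L^p$ bounds yields $|E^h_\lambda|\leq C\lambda^{-p}$ uniformly in $h$.

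\step{2} The $d_h$-version of the Calderón-Zygmund characterisation of Sobolev functions implies that $w^h$ is $C\lambda$-Lipschitz with respect to $d_h$ on $\Omega\setminus E^h_\lambda$. A Whitney-type extension in the $d_h$-metric then produces $z^h_\lambda\in W^{1,\infty}(\Omega;\R^m)$ with $z^h_\lambda=w^h$ on $\Omega\setminus E^h_\lambda$ and $\|\nabla_h z^h_\lambda\|_\infty\leq C\lambda$.

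\step{3} Extract from $(w^{h_n})$ a further subsequence $(w^{h_{n(k)}})$ and choose $\lambda_k\to\infty$ slowly enough that both $\lambda_k^p|E^{h_{n(k)}}_{\lambda_k}|\to 0$ and $\int_{\{|\nabla_h w^{h_{n(k)}}|>\lambda_k\}}|\nabla_h w^{h_{n(k)}}|^p\to 0$ (the latter is possible because $(|\nabla_h w^{h_n}|^p)$ is $L^1$-bounded, hence tight). Setting $z_k:=z^{h_{n(k)}}_{\lambda_k}$, claim (i) is immediate since $z_k=w^{h_{n(k)}}$ on $\Omega\setminus E^{h_{n(k)}}_{\lambda_k}$, while claim (ii) follows from the splitting
\begin{equation*}
\int_A|\nabla_{h_{n(k)}}z_k|^p \leq \int_{A\setminus E^{h_{n(k)}}_{\lambda_k}}|\nabla_{h_{n(k)}}w^{h_{n(k)}}|^p + (C\lambda_k)^p\,|A\cap E^{h_{n(k)}}_{\lambda_k}|,
\end{equation*}
together with the two defining properties of $\lambda_k$. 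The main technical obstacle is verifying that the maximal-function, Calderón-Zygmund, and Whitney-extension constants can be chosen $h$-independent in the anisotropic metric $d_h$, which is why it is cleanest to perform the rescaling $x_3\mapsto x_3/h$ at the outset and reduce to the classical FMP lemma on the stretched slab, whose Lipschitz character is uniform because $S$ is Lipschitz.
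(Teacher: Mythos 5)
The paper cites \cite{BoceaFon02,BraidesZeppieri07} for this lemma and does not reprove it, so the comparison is with the correctness of your argument on its own terms. Steps 1 and 2 (anisotropic maximal function, Calder\'on--Zygmund pointwise estimate, Whitney/McShane extension in the metric $d_h$) are sound and do reproduce the scaled Lipschitz truncation with $h$-independent constants. The gap is in Step 3, and it is a real one: the two conditions you impose on $\lambda_k$ do not imply equi-integrability of $(|\nabla_{h_{n(k)}}z_k|^p)_k$.

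Concretely, in the splitting
\begin{equation*}
\int_A|\nabla_{h_{n(k)}}z_k|^p \ \le\ \int_{A\setminus E_k}|\nabla_{h_{n(k)}}w^{h_{n(k)}}|^p\ +\ (C\lambda_k)^p\,|A\cap E_k|,
\end{equation*}
the second term is controlled by $\lambda_k^p|E_k|\to 0$, but the first term is \emph{not} uniformly small for $|A|$ small: all you know on $\Omega\setminus E_k$ is the pointwise bound $|\nabla_{h_{n(k)}}w^{h_{n(k)}}|\le\lambda_k$, which gives $\lambda_k^p|A|$, and $\lambda_k\to\infty$. Nothing in your choice of $\lambda_k$ controls $\int_{A}|\nabla_{h}w^{h_{n(k)}}|^p\wedge M^p$ for a fixed threshold $M$, which is what equi-integrability requires. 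A simple one-dimensional spike already defeats the argument: take $p=2$, $\Omega=(0,1)$, $w_n'=\sqrt n\,\chi_{(0,1/n)}$. Then $(w_n)$ is bounded in $W^{1,2}$, $(|w_n'|^2)$ is not equi-integrable, and your two conditions are met by $\lambda_n=\sqrt n$ --- yet with this choice the truncation does nothing ($z_n=w_n$), so (ii) fails. In fact, for this family $\lambda^p\,|\{M(|w_n'|^p)>\lambda^p\}|$ stays bounded away from zero uniformly in $n$ as long as $\lambda\lesssim\sqrt n$, so the only way to force $\lambda_k^p|E_k|\to 0$ is to take $\lambda_k$ so large that no truncation occurs. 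This shows the weak $(1,1)$ bound alone cannot give the decay you need; it is sharp precisely on concentrating sequences.

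The missing ingredient in the classical Acerbi--Fusco/Fonseca--M\"uller--Pedregal decomposition is Chacon's biting lemma (or, equivalently, passing to a subsequence along which $|\nabla_h w^{h_n}|^p\,dx$ converges weakly-$*$ as measures and separating the singular part). The biting lemma produces sets $B_j\downarrow$, $|B_j|\to 0$, on whose complement $(|\nabla_h w^{h_{n}}|^p)$ is already equi-integrable; one then chooses the truncation so that the exceptional set is essentially contained in $B_{j(k)}$, and the resulting $|\nabla_h z_k|$ is bounded uniformly, not merely by a diverging $\lambda_k$. Without this step, the Lipschitz truncation alone cannot convert an $L^1$ bound into equi-integrability. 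Separately, the closing remark that one may ``reduce to the classical FMP lemma on the stretched slab'' glosses over the fact that the rescaled domain $S\times(-\tfrac1{2h},\tfrac1{2h})$ varies with $h$; making the FMP constants and the biting-lemma subsequence extraction uniform over this family of domains is precisely the technical content of \cite{BoceaFon02,BraidesZeppieri07}, so this is not a shortcut.
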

The following theorem states the lower bound claim.
\begin{theorem}\label{theorem:low}
Let Assumption \ref{ass:main} be valid.
For any sequence $(y^{h})_{h>0} \subset H^1(\Omega;\mathbb{R}^3)$  satisfying (\ref{bendingregime}) and (\ref{jakayh}),  we have
\begin{equation}
\liminf_{h \to 0} \frac{1}{h^2}I^{h}(y^{h}) \geq I_0(y).
\end{equation}
\end{theorem}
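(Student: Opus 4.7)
The argument follows the standard $\Gamma$-liminf strategy for bending-plate models: geometric rigidity produces an approximating rotation field, a Taylor expansion of $W^h$ near the identity converts the elastic energy into the quadratic form $Q^h$, a truncation ensures equi-integrability of the scaled strain, and Assumption~\ref{ass:main} allows passage to the limit at the level of $Q_2^h$. Concretely, since (\ref{ass:non-degenerate}) together with (\ref{bendingregime}) gives $\|\dist(\nabla_h y^h, \SO{3})\|_{L^2}^2 \le C h^2$, Lemma~\ref{lemaconv} supplies $R^h : S \to \SO{3}$ and $\tilde R^h \in H^1(S; \R^{3\times 3})$ with $\|R^h - \tilde R^h\|_{L^2} = O(h)$ and $\|\nabla \tilde R^h\|_{L^2} = O(1)$; passing to a subsequence, $\tilde R^h \weak R$ in $H^1(S)$ and $R^h \to R$ in $L^2(S)$, and (\ref{jakagradijenti}) forces $R = (\nabla' y \,|\, n) \in \SO{3}$ almost everywhere.

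I then introduce the scaled strain
\[
  G^h(x) := h^{-1}\bigl((\tilde R^h(x'))^T \nabla_h y^h(x) - I\bigr),
\]
which is bounded in $L^2(\Omega; \R^{3\times 3})$ by rigidity. To make the Taylor remainder negligible, I need $h|G^h|$ to be pointwise small on a set of full measure; this is achieved by applying Lemma~\ref{drugavazna} to a suitable displacement primitive of $G^h$ (built from $y^h$ minus a rigid contribution from $\tilde R^h$) to obtain a modification $\tilde G^h$ which agrees with $G^h$ off a set of vanishing measure, has $|\tilde G^h|^2$ equi-integrable, and, after a further Chebyshev-type cutoff at level $\delta_h/h$ with $\delta_h \to 0$, satisfies $h|\tilde G^h| \le \delta_h$ pointwise while still $\tilde G^h \weak G$ in $L^2(\Omega)$. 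On the truncation set, frame indifference (\ref{ass:frame-indifference}) followed by (\ref{assumQh}) gives
\[
 \tfrac{1}{h^2} W^h(x_3, \nabla_h y^h) = \tfrac{1}{h^2} W^h(x_3, I + h \tilde G^h) \ge Q^h(x_3, \tilde G^h) - r(\delta_h)\, |\tilde G^h|^2,
\]
while on the complementary set $W^h \ge 0$ combined with equi-integrability contributes no loss in the liminf.

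Using $\|\nabla \tilde R^h\|_{L^2} \le C$, the near-skew-symmetry of $(\tilde R^h)^T \nabla \tilde R^h$, and integration by parts in $x_3$, I identify the weak limit: the in-plane symmetric part of $G$ is affine in $x_3$,
\[
 \sym G_{\alpha\beta}(x', x_3) = \bar B(x')_{\alpha\beta} + x_3\, II(x')_{\alpha\beta},
\]
for some $\bar B \in L^2(S; \mathbb{S}_2)$, while the third row and column of $G$ remain free. Decomposing the in-plane $2 \times 2$ part of $\sym \tilde G^h$ by $L^2(I)$-projection onto the span of $\{1, x_3\}$ as $B^h(x') + x_3 A^h(x') + r^h(x', x_3)$, the definition (\ref{defQ2}) and property (Q1) give the pointwise bound $Q^h(x_3, \tilde G^h) \ge Q_2^h(x_3, A^h, B^h)$ up to a cross term involving $r^h$ that vanishes in the limit. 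Since $(A^h, B^h) \weak (II, \bar B)$ in $L^2(S)$ and, by Assumption~\ref{ass:main}, the coefficients of $Q_2^h$ converge weakly-$\star$ in $L^\infty(I)$, convexity of $Q_2^h$ in $(A, B)$ yields
\[
 \liminf_{h \to 0} \int_\Omega Q_2^h(x_3, A^h, B^h)\, dx \ge \int_\Omega Q_2(x_3, II(x'), \bar B(x'))\, dx,
\]
and the pointwise inequality $\int_I Q_2(x_3, II(x'), \bar B(x'))\, dx_3 \ge \min_{B \in \mathbb{S}_2}\int_I Q_2(x_3, II(x'), B)\, dx_3 = Q_0(II(x'))$ integrated over $S$ finishes the proof.

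The main obstacle is the identification of the $x_3$-slope of $\sym G$ as the second fundamental form $II$: it requires trading $x_3$-moments of $(\tilde R^h)^T \nabla_h y^h - I$ for $x'$-derivatives of $\tilde R^h$ through its $H^1$-regularity, and combining this with the antisymmetry of $R^T \nabla R$. A related technicality is controlling the non-affine-in-$x_3$ remainder $r^h$ in the reduction from $Q^h$ to $Q_2^h$, which is handled by combining the equi-integrability of $|\tilde G^h|^2$ from Step~2 with the uniform bound $\|\nabla \tilde R^h\|_{L^2} = O(1)$.
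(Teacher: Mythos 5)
Your overall architecture is right: rigidity, Taylor expansion with a truncation, identification of the affine‐in‐$x_3$ limit of the in-plane strain, reduction to $Q_2^h$, and passage to the limit via Assumption~\ref{ass:main}. The final lower-semicontinuity step is also fine in spirit (once the integrand has the form $Q_2^h(x_3, A^h(x'), B^h(x'))$, integrating first over $x_3$ turns $Q_2^h$ into a quadratic form with converging scalar coefficients, and weak lower semicontinuity in $L^2(S)$ finishes). But there is a genuine gap in the step where you pass from $Q^h(x_3,\tilde G^h)$ to $Q_2^h(x_3,A^h,B^h)$.

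You decompose $\sym\tilde G^h_{\alpha\beta}(x',\cdot)$ by its $L^2(I)$-projection onto $\mathrm{span}\{1,x_3\}$, writing it as $B^h(x')+x_3A^h(x')+r^h(x',x_3)$, and claim the inequality $Q^h(x_3,\tilde G^h)\ge Q_2^h(x_3,A^h,B^h)$ ``up to a cross term involving $r^h$ that vanishes in the limit.'' This cross term does not vanish: $r^h$ only converges weakly to zero (it contains $\sym\bigl(\nabla' z^h\bigr)_{\alpha\beta}$-type contributions, which are bounded in $L^2$ but have no reason to be strongly small), and the bilinear form underlying $Q^h$ has coefficients that also oscillate in $x_3$. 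You therefore have a product of a weakly vanishing sequence with an oscillating bilinear form, which in general does not go to zero. The orthogonality of $r^h$ to $\{1,x_3\}$ in the \emph{unweighted} $L^2(I)$ inner product does not make the cross term drop out because $Q^h(x_3,\cdot)$ is not $x_3$-independent. Moreover, your earlier claim of a Chebyshev cutoff producing $h|\tilde G^h|\le\delta_h$ pointwise while retaining $\tilde G^h\rightharpoonup G$ is fine, and the equi-integrability from Lemma~\ref{drugavazna} is used correctly, but neither of these controls the cross term above; the final remark that the non-affine remainder ``is handled by combining the equi-integrability of $|\tilde G^h|^2$ with $\|\nabla\tilde R^h\|_{L^2}=O(1)$'' does not supply the missing strong convergence.

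The paper avoids this obstruction by a different mechanism: it applies Jensen's inequality over a covering of $S$ by small squares $D_i^\eps$. On each fixed square, the averages of $\nabla'\tilde z^{h_k}$ converge to zero strongly in $L^2(I)$ (compactness of the trace), while the averages of $\chi_k(B^{h_k}+x_3A^{h_k})$ converge to $\int_{D_i^\eps}(B'+x_3A)/|D_i^\eps|$ because of weak $L^2(S)$ convergence against a fixed test set; only the $x_3$-derivative part of the corrector survives as a function $\psi^{h_k}(x_3)$ entering the third row/column. After this averaging the strain is \emph{exactly} of the form $\iota(B+x_3A)+d(x_3)\otimes e_3+e_3\otimes d(x_3)$ with constants in $x'$, so definition~(\ref{defQ2}) applies with no remainder, and then Assumption~\ref{ass:main} (weak-$\star$ convergence of the coefficients of $Q_2^h$) can be used for the $x_3$-integral of a function that is piecewise constant in $x'$. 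The $\eps\to0$ limit then recovers the pointwise formula. If you want to salvage your approach, you need to introduce this local averaging in $x'$ (or an equivalent device) to kill the non-affine remainder \emph{strongly}, rather than relying on the $L^2(I)$ orthogonal projection.
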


\begin{proof}
We take the subsequence $(h_k)_{k \in \N}$ such that liminf is attained.
\begin{enumerate}[1.]
\item  We define the sequence $(z^{h_k})_{k \in \N}$ by the following decomposition of $y^{h_k}$:
\begin{equation}\label{seq}
y^{h_k} = \bar{y}^{h_k} + h_k x_3 \tilde{R}^{h_k} e_3 + h_k z^{h_k}.
\end{equation}
Note  that $\bar{z}^{h_k} = 0$. By using Lemma \ref{lemaconv}  and the relation
$$\nabla_{h_k} z^{h_k} =\frac{\nabla_{h_k} y^{h_k}-R^{h_k}}{h_k}- \left(\frac{\nabla'\bar{y}^{h_k} -(R^{h_k})'}{h_k} + x_3 \nabla'\tilde{R}^{h_k} e_3\vert \frac{1}{h_k} (\tilde{R}^{h_k} e_3 - R^{h_k} e_3)  \right), $$
we obtain that $\Vert \nabla_{h_k} z^{h_k}\Vert_{L^2}$ is bounded. Also, using the Ponicar\'{e}'s inequality over the cross-sections, we derive that $z^{h_k} \to 0$ strongly in $L^2(\Omega)$.
\item Following \cite{FJM-02} we define the approximate strain by
\begin{align*}
G^{h_k}&=\frac{(R^{h_k})^t\nabla_{h_k} y^{h_k}-\id}{h_k}.
\end{align*}
We rewrite $G^{h_k}$ in the following form
$$
G^{h_k}=B^{h_k}(x') + x_3 A^{h_k}(x') + (R^{h_k})^t \nabla_{h_k} z^{h_k},
$$
where $A^{h_k}$ and $B^{h_k}$ are defined by
\begin{eqnarray}
\label{defah} A^{h_k} &=& (R^{h_k})^t(\partial_1{\tilde{R}^{h_k}} e_3 \vert \partial_2{\tilde{R}^{h_k}} e_3 \vert 0), \\
\label{defbh} B^{h_k} &=& \frac{1}{h_k}\left((R^{h_k})^t(\nabla'\bar{y}^{h_k} \vert \tilde{R}^{h_k} e_3) -\id \right).
\end{eqnarray}

Since, by Lemma \ref{lemaconv} and Lemma  \ref{lemastrong}, $R^{h_k} \to (\nabla' y, n)=:R$ strongly  and $\nabla\tilde{R}^{h_k} e_3 \rightharpoonup \nabla' n$ weakly in $L^2(S)$ we obtain that
\begin{align*}
A^{h_k} \rightharpoonup
\left(
\begin{array}{cc}
II(x') & \begin{array}{c}
0 \\
0
\end{array} \\
\nabla'n \cdot n & 0
\end{array}
\right) = : A \mbox{ weakly in } L^2(S;\mathbb{R}^{3 \times 3}),
\end{align*}
where $II_{\alpha \beta}(x')= \partial_{\alpha}{y} \cdot  \partial_\beta{n}$ is the second fundamental form. Also by the inequalities in Lemma \ref{lemaconv} we obtain that on a subsequence (not relabeled)
there exists $B' \in L^2(S;\mathbb{R}^{3 \times 3})$
\begin{align*}
B^{h_k} \rightharpoonup
B' \mbox{ weakly in } L^2.
\end{align*}

\item Denote by  $\chi_{k}:S \to \{0,1\}$  the characteristic function:
\begin{equation*}
\chi_k(x)= \left\{
\begin{array}{cc}
1,  &  x \in S_{h_k}^{'} \cap \{ \vert G^{h_k} \vert \leq h_k^{-\frac{1}{2}} \}, \\
0, & \mbox{ else. }
\end{array} \right.
\end{equation*}
Notice that $\chi_k \to 1$ boundedly in measure.
Taking into account the frame-indifference of $W^h$ and (\ref{assumQh}) we conclude
\begin{eqnarray*}
\liminf_{k \to \infty} \tfrac{1}{h_{k} ^2} \int_{\Omega} W^{h_{k}}(x,\nabla_{h_{k}} y^{h_{k}}) &=& \liminf_{k \to \infty} \tfrac{1}{h_{k} ^2} \int_{\Omega} W^{h_{k}}(x,\id+h_kG^{h_k}) \\
&\geq &  \liminf_{k \to \infty} \int_{\Omega} \chi_k Q^{h_{k}}(x_3, G^{h_k}).
\end{eqnarray*}

\item By Lemma \ref{lemaconv} we know that $R^{h_k} \to R$ converge in $L^2(\Omega)$ strongly. Using Lemma \ref{drugavazna} we conclude that there is a subsequence   $(\hat{z}^{h_k})_{k \in \N} $, not relabeled, such that
\begin{enumerate}[(i)]
\item $ \lim_{k \to \infty} |\Omega  \cap \{ \hat{z}^{h_k} \neq z^{h_{k}} \textrm{ or } \nabla \hat{z}^{h_k} \neq  \nabla z^{h_{k}} \}|=0$;
\item  $(|\nabla_{h_{k}}(\hat{z}^{h_k})\vert^2 )_{k \in \N}$ is equi-integrable.
\end{enumerate}
Since $R^h$ is bounded in $L^{\infty}$ norm   and $(|\nabla_{h_k} \hat{z}^{h_k}|^2)_{k \in \N}$ is an equi-integrable sequence we conclude, by using the Egoroff's theorem,  that:
\begin{equation}\label{ocjenaRh}
(R^{h_k})^t \nabla_{h_k} \hat{z}^{h_k} -  R^t \nabla_{h_k}  \hat{z}^{h_k}\to 0 \mbox{
 strongly in } L^2(\Omega).
 \end{equation}
Next, we can construct the sequence $(M^{h_k})_{k \in \N} \subset C^\infty_c(S,\mathbb{R}^{3 \times 3})$ such that
\begin{enumerate}[(i)]
\item $M^{h_k} \to R \mbox{ in } L^2(S)$;
\item $\Vert M^{h_k}\Vert_{L^{\infty}(S)} \leq C$, for some $C>0$ independent of $h_k$;
\item $\lim_{h \to 0} \Vert \nabla' M^{h_k} \Vert_{L^\infty} \Vert \hat{z}^{h_k}\Vert_{L^2} \to 0$.
\end{enumerate}
We conclude  that
\begin{equation}\label{ocjenaMh}
(M^{h_k})^t \nabla_{h_k} \hat{z}^{h_k} -  R^t \nabla_{h_k} \hat{z}^{h_k}\to 0 \mbox{
 strongly in }L^2(\Omega)
 \end{equation}
 and define a sequence $(\tilde{z}^{h_k})_{k \in \N}$ by $\tilde{z}^{h_k}= (M^{h_k})^t \hat{z}^{h_k}$. From the relation
\begin{equation}\label{relacijamMh}
(M^{h_k})^T \nabla_{h_k} \hat{z}^{h_k} = \nabla_{h_k}\left((M^{h_k})^t \hat{z}^{h_k} \right) - (\nabla' M^{h_k})^t \hat{z}^{h_k},
\end{equation}
we obtain that
\begin{equation}\label{relacijamaroje11}
\| \nabla_{h_k} \tilde{z}^{h_k} -(R^{h_k})^t \nabla_{h_k} \hat{z}^{h_k} \|_{L^2} \to 0,
\end{equation}
and thus the sequence $(| \nabla \tilde{z}^{h_k}|^2)_{k \in \N} $ is also equi-integrable. Combining  (Q2) with the relations  (\ref{ocjenaRh}), (\ref{ocjenaMh}),
(\ref{relacijamMh}) and equi-integrability  we obtain
$$\liminf_{k \to \infty} \int_{\Omega} \chi_k Q^{h_{k}}(x_3, G^{h_k})
= \liminf_{k \to \infty} \int_{\Omega}  Q^{h_{k}} \left(x_3, \chi_k(B^{h_k} + x_3 A^{h_k}) + \nabla_{h_{k}} \tilde{z}^{h_{k}} \right).$$
Without loss of generality we may assume that $\tilde{z}^{h_k}$ are smooth functions.
\item Next we approximate $S$ by the union of squares $S^\eps= \bigcup_{i} D_i^\eps$ parallel to the axis whose edge length is equal to $\eps$ and $\vert S\setminus S^\eps \vert \to 0$ as $\eps \to 0$.   Using the Jensen's inequality and the convexity of $Q^{h_k}(x_3,\cdot)$ we obtain for every $\eps>0$
\begin{eqnarray*}
 &&\liminf_{k \to \infty} \int_{\Omega}  Q^{h_{k}} \left(x_3, \chi_{k}(B^{h_k} + x_3 A^{h_k}) + \nabla_{h_{k}} \tilde{z}^{h_{k}} \right)dx \nonumber  \\
 &  \geq  &  \liminf_{k \to \infty}\int_I  \sum_{i}  \vert\D_i^{\eps}\vert  Q^{h_k}\left(x_3,\frac{1}{\vert D_{i}^{\eps} \vert}\int_{D_i^{\eps}} \left(\chi_{k}( B^{h_k} +   x_3 A^{h_k})+  \nabla_{h_k}{\tilde{z}^{h_k}} \right)dx' \right) dx_3.
\end{eqnarray*}
Since $B^{h_k} \rightharpoonup  B'$ and $A^{h_k} \rightharpoonup   A$ converge weakly in  $L^2(S;\R^{3 \times 3})$   we derive that as $k \to \infty$
$$\int_{D_i^{\eps}} \chi_k \left(  B^{h_k} + x_3 A^{h_k} \right) \to  \int_{D_i^{\eps}}( B' + x_3 A) \mbox{ on every } D_i^{\eps}. $$

From the compactness theorem for trace for such functions   we obtain that for every $\eps>0$
$$\left(\int_{D_{i}^{\eps} \times I} \partial_1{\tilde{z}^{h_k}},\int_{D_{i}^{\eps}} \partial_2{\tilde{z}^{h_k}}  \right) = \left(\int_{\partial{D_{i}^{\eps}}} \tilde{z}^{h_k} \cdot n_1 ,\int_{\partial{D_{i}^{\eps}}} \tilde{z}^{h_k} \cdot n_2  \right) \to (0,0) \mbox{  in  }
L^2(I),
$$
for each $D_{i}^{\eps}$.  We denote by
$\psi^{h_k}(x_3)= \frac{1}{h_k} \partial_3 \int_{D_{i}^{\eps}}\tilde{z}^{h_k}dx'$ and by $B$ the symmetric part of the submatrix of $B'$ obtained by removing the third row and column.  Using the definition  of $Q^{h_k}_2$ we conclude that for every $\eps>0$
\begin{eqnarray*}
  &  &   \liminf_{k \to \infty}\int_I  \sum_{i}  \vert D_i^{\eps} \vert Q^{h_k}\left(x_3,\frac{1}{\vert D_i^{\eps} \vert}\int_{D_{i}^{\eps}}\left(  \chi_{k} (B^{h_k} +   x_3 A^{h_k}) +  \nabla_{h_k}{\tilde{z}^{h_k}} \right) dx'\right) dx_3  \\
 &  = &  \liminf_{k \to \infty}\int_I  \sum_{i} \vert D_i^{\eps} \vert Q^{h_k}\left(x_3,\frac{1}{\vert D_i^{\eps} \vert}\int_{D_i^{\eps}} \left( B' +   x_3 A + (0,0,\psi^{h_k}(x_3)) \right) dx'\right) dx_3 \\
 &  \geq & \liminf_{k \to \infty}\int_{I}  \sum_{i} \vert D_i^{\eps} \vert Q_{2}^{h_k}\left(x_3,\frac{1}{\vert D_i^{\eps}\vert}\int_{D_{i}^{\eps}} (B + x_3 II) dx' \right)  dx_3\\
  &  \geq &   \sum_{i} \vert D_i^{\eps} \vert \liminf_{k \to \infty}\int_{I}   Q_{2}^{h_k}\left(x_3,\frac{1}{\vert D_i^{\eps}\vert}\int_{D_{i}^{\eps}} (B + x_3 II) dx' \right)  dx_3 \\
  &\geq&   \sum_{i} \vert D_i^{\eps} \vert    Q_{0}\left(\frac{1}{\vert D_i^{\eps} \vert} \int_{D_{i}^{\eps}} II(x') dx' \right).
\end{eqnarray*}
Finally noting that $\chi_{S^\eps} \left(\sum_i \frac{1}{\vert D_i^\eps \vert}\int_{D_i^\eps} f \right) \to f $ strongly in $L^2(S)$ for each $f \in L^2(S)$  and taking the limit as $\eps \to 0$ we finish the proof.

\end{enumerate}
\end{proof}
\subsection{Upper bound}
Denote by $\mathcal{A}(S)$ the set of all $y \in W^{2,2}(S,\R^3) \cap C^{\infty}(\bar{S},\R^3)$ with the property: for each $B \in C^{\infty}(\bar{S},\mathbb{S}_2)$ satisfying $B=0$ in a neighborhood of $\{x \in S:II(x)=0 \}$ there are $\alpha \in C^{\infty}(\bar{S})$ and $g \in C^{\infty}(\bar{S},\mathbb{R}^2)$ such that
\begin{equation} \label{eq:zzadnje}
B=\sym{\nabla'g} + \alpha II.
\end{equation}
We will use the following lemma, already used in \cite{Horneuvel12} and \cite{Velcic-13}.
It was  proved in \cite{Schmidt-07} for convex sets $S$. By using the results from \cite{Hornung-arma2} we can extend the claim.
\begin{lemma}\label{lemma:density}
Assume that $S$ is a bounded Lipschitz domain with boundary which is piecewise $C^1$.
The set $\mathcal{A}(S)$ is dense in $W^{2,2}_{iso}(S)$ with respect to the strong  $W^{2,2}(S)$ topology.
\end{lemma}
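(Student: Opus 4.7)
The approach combines two ingredients: the algebraic construction of the decomposition $B = \sym\nabla' g + \alpha II$ due to Schmidt~\cite{Schmidt-07} (valid for convex $S$), with the structural theory of $W^{2,2}$ isometric immersions on Lipschitz piecewise $C^1$ domains from Hornung~\cite{Hornung-arma2}. The plan is first to reduce to smooth isometries by approximation, and then to verify property~(\ref{eq:zzadnje}) on each smooth isometry.

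For the first stage I would invoke Hornung's strong $W^{2,2}$ density of smooth isometric immersions in $W^{2,2}_{iso}(S)$ -- the analogue of Pakzad's convex-domain density result, extended in \cite{Hornung-arma2} to piecewise $C^1$ Lipschitz $S$. This produces $y_n \in W^{2,2}_{iso}(S) \cap C^\infty(\bar S;\R^3)$ with $y_n \to y$ strongly in $W^{2,2}$. It therefore suffices to show that each such smooth isometry (possibly after a further small smooth perturbation that regularises the flat set $\{II = 0\}$) already lies in $\mathcal{A}(S)$, since then a diagonal argument produces the desired approximation of $y$ by elements of $\mathcal{A}(S)$.

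For the second stage, fix smooth symmetric $B$ vanishing in a neighbourhood of $\{II=0\}$. Since the immersion is flat into $\R^3$, Gauss's \emph{theorema egregium} forces $\det II \equiv 0$, so $II$ has rank at most one, and on the open set $U = \{II \neq 0\}$ the surface is ruled along the null direction of $II$. In local coordinates $(s,t)$ adapted to these rulings one has $II = \kappa\, ds \otimes ds$ with $\kappa \neq 0$, and the equation $B = \sym\nabla' g + \alpha II$ becomes a triangular system: two first-order ODEs in $t$ for the components of $g$, together with the pointwise relation $\alpha = (B_{ss} - \partial_s g_1)/\kappa$ on $U$. The hypothesis that $B$ vanishes near $\{II=0\}$ allows $\alpha$ to be extended smoothly by zero across $\partial U$, while Hornung's global description of rulings (their termination on $\partial S$ and their behaviour near the flat set) lets the local integrations be assembled into globally defined smooth $g$ and $\alpha$ on $\bar S$.

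The main obstacle is precisely this global patching on non-convex domains: rulings may leave and re-enter $S$, meet $\partial S$ non-transversally, or accumulate on $\{II=0\}$, and any of these phenomena threatens the smoothness of $g$ and $\alpha$. Schmidt's convexity hypothesis rules them out directly; lifting that hypothesis is exactly the role played here by the piecewise $C^1$ assumption on $\partial S$ together with Hornung's structural theorem. A small generic perturbation of the approximants $y_n$ may also be required to ensure transversality of the rulings with $\partial S$ and the regularity of $\partial\{II=0\}$, which together certify that the resulting $y_n$ belong to $\mathcal{A}(S)$.
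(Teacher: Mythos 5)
The paper does not give a proof of this lemma: it merely records that Schmidt~\cite{Schmidt-07} proved it for convex $S$ and that Hornung's results~\cite{Hornung-arma2} allow the extension to piecewise $C^1$ Lipschitz domains. Your outline reconstructs exactly that intended combination---Hornung's density of smooth isometric immersions in $W^{2,2}_{iso}(S)$ followed by Schmidt's ruling-coordinate construction of $g$ and $\alpha$ from the triangular ODE system---and correctly singles out the global patching of the local solutions along rulings as the point where Hornung's structural analysis and the piecewise-$C^1$ hypothesis are consumed. Since the paper defers entirely to its references, your proposal matches the level of detail actually present; the only caution is that the statement ``a small generic perturbation handles the rulings and $\{II=0\}$'' is precisely the nontrivial content supplied by~\cite{Hornung-arma2} rather than a routine step, so in a full write-up that stage would need to be drawn explicitly from the approximation theorem there, which already produces approximants with controlled ruling and flat-set structure, rather than invoked as an ad hoc perturbation.
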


We now construct the recovery sequence.
\begin{theorem}
Let Assumption \ref{ass:main} be valid.
For any  $y \in W^{2,2}_{iso}(S)$ there is a sequence $(y^{h})_{h>0} \subset H^1(\Omega;\mathbb{R}^3)$  such that $I^{h}(y^{h}) \leq C h^2 $, $y^{h} \to y$, strongly in $L^2(\Omega)$ and
\begin{equation}
\lim_{h \to 0} \frac{1}{h^2}I^{h}(y^{h})= I_0(y).
\end{equation}
\end{theorem}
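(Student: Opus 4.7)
The plan is to construct an explicit recovery sequence for $y\in\mathcal A(S)$ and then pass to $W^{2,2}_{iso}(S)$ by density (Lemma~\ref{lemma:density}). The ansatz is engineered so that the $h$-oscillations that the optimal transverse corrector inherits from $Q^h$ are absorbed through the minimizing property in \eqref{defQ2}; after this reduction, only the weak-$*$ convergence of the coefficients of $Q_2^h$ provided by Assumption~\ref{ass:main} is needed to pass to the limit.

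\textbf{Choice of correctors.} Fix $y\in\mathcal A(S)$ and set $R=(\partial_1 y,\partial_2 y,n)\in C^\infty(\bar S;\SO{3})$. Let $B^{*}(x')\in\mathbb{S}_2$ be the unique minimizer in \eqref{defqav} with $A=II(x')$; by linearity it depends smoothly on $II(x')$, so $B^{*}\in C^\infty(\bar S;\mathbb{S}_2)$. A standard cutoff gives $B^{*}_\delta=\chi_\delta B^{*}$, smooth and vanishing in a neighborhood of $\{II=0\}$, with $B^{*}_\delta\to B^{*}$ pointwise and $\int_S\!\int_I Q_2(x_3,II,B^{*}_\delta)\,dx_3\,dx'\to\int_S Q_0(II)\,dx'$ as $\delta\to 0$. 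The defining property of $\mathcal A(S)$ then provides the representation $B^{*}_\delta=\sym\nabla' g+\alpha\,II$ with $g\in C^\infty(\bar S;\R^2)$ and $\alpha\in C^\infty(\bar S)$. For each $h>0$ let $d^{*h}(x_3,A,B)\in\R^3$ denote the unique minimizer in \eqref{defQ2}; it is Borel in $x_3$, linear in $(A,B)$, uniformly $L^\infty$-bounded, and becomes smooth in $x'$ after substituting the smooth data $A=II(x')$, $B=B^{*}_\delta(x')$.

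\textbf{Ansatz and scaled strain.} I would take
\begin{equation*}
y^h(x)=y(x')+hx_3\,n(x')+h\bigl(\alpha(x')\,n(x')+g_\alpha(x')\,\partial_\alpha y(x')\bigr)+h^2\int_0^{x_3}R(x')\,\tilde d^h(x',s)\,ds,
\end{equation*}
where $\tilde d^h\in L^\infty(\Omega;\R^3)$ is obtained from $d^{*h}(x_3,II(x'),B^{*}_\delta(x'))$ by an explicit smooth correction of its first two components, tuned to absorb the $(3,\alpha)$-entries coming from the in-plane correction $\alpha n+g_\alpha\partial_\alpha y$. A direct computation using the isometry identities $\partial_{\alpha\beta}y=II_{\alpha\beta}\,n$ then shows that $y^h\in H^1(\Omega;\R^3)$, $y^h\to y$ in $L^\infty$, the scaled strain $G^h:=(R^t\nabla_h y^h-\id)/h$ is uniformly bounded in $L^\infty(\Omega)$, and
\begin{equation*}
\sym G^h(x)=\iota\bigl(B^{*}_\delta(x')+x_3\,II(x')\bigr)+d^{*h}(x',x_3)\otimes e_3+e_3\otimes d^{*h}(x',x_3)+O(h),
\end{equation*}
uniformly in $\Omega$. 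In particular $\dist(\nabla_h y^h,\SO{3})\le Ch$, so $h|G^h|$ stays in the range where \eqref{assumQh} applies.

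\textbf{Passage to the limit (and main obstacle).} Writing $\nabla_h y^h=R(\id+hG^h)$ with $R\in\SO{3}$, frame indifference \eqref{ass:frame-indifference} together with \eqref{assumQh} and the $L^\infty$-bound on $G^h$ give $\tfrac{1}{h^2}I^h(y^h)=\int_\Omega Q^h(x_3,G^h)\,dx+o(1)$. Property (Q2) then absorbs the $O(h)$-remainder in $\sym G^h$, and by the very definition \eqref{defQ2} of $Q_2^h$ and the minimizing choice of $d^{*h}$,
\begin{equation*}
\int_\Omega Q^h(x_3,G^h)\,dx=\int_\Omega Q_2^h\bigl(x_3,II(x'),B^{*}_\delta(x')\bigr)\,dx+o(1).
\end{equation*}
Since this integrand is a finite linear combination of coefficients of $Q_2^h(x_3,\cdot,\cdot)\in L^\infty(I)$ with smooth multipliers in $x'$, Assumption~\ref{ass:main} together with Fubini yields
\begin{equation*}
\int_\Omega Q_2^h(x_3,II,B^{*}_\delta)\,dx\to\int_\Omega Q_2(x_3,II,B^{*}_\delta)\,dx\xrightarrow{\delta\to 0}\int_S Q_0(II(x'))\,dx'=I_0(y),
\end{equation*}
and a diagonal argument in $\delta$ and in the $\mathcal A(S)$-approximation completes the construction. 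The hard part is exactly this: the optimal transverse corrector $d^{*h}$ carries the $h$-oscillations of $Q^h$ and does not converge, so the limit cannot be taken inside $Q^h(x_3,G^h)$ directly; the whole ansatz is arranged so that this $h$-dependence disappears once the minimizing property is invoked, leaving only the coefficients of $Q_2^h$, for which we have only weak-$*$ convergence. Showing that the $O(h)$-remainder in $\sym G^h$ produces only $o(1)$ error after being integrated against the possibly oscillating $Q^h$ is where (Q2) and the uniform bound on $d^{*h}$ are essential.
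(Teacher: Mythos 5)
Your proposal is correct and reaches the same limit energy, but the intermediate construction is genuinely different from the paper's. The paper's proof discretizes the data: it approximates $S$ by a grid of $\eps$-squares $D_i^\eps$, replaces $B$, $II$, $b$ by their averages $\tilde B^\eps, \tilde{II}^\eps, \tilde b^\eps$ (piecewise constant), takes the transverse minimizer $d_{\min}^{h,\eps}$ for this piecewise-constant data (so it is independent of $x'$ on each square but jumps across square edges), mollifies it into $d_\delta^{h,\eps}$ with $\|\nabla' d_\delta^{h,\eps}\|_\infty\lesssim 1/\delta$, estimates the error on the bands $S_\delta^\eps$ where the mollification acts, and finally diagonalizes over $\eps$, $\delta$, $h$ via the Attouch lemma (with an additional approximation $B_k\to B_{\min}$). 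You instead observe that the pointwise minimizer $d^{*h}(x_3, II(x'), B^*_\delta(x'))$ is \emph{already} smooth in $x'$ with $h$- and $x_3$-uniform $L^\infty$ bound on its $x'$-gradient: the minimizer in \eqref{defQ2} is a linear map $(A,B)\mapsto d$ whose coefficients, by (Q1), are controlled by $\eta_2/\eta_1$ independently of $h$ and $x_3$, and $(II,B^*_\delta)$ is smooth. This removes the $\eps$-grid and the $h/\delta\to 0$ constraint entirely, and the passage to the limit reduces, as you note, to the fact that $Q_2^h(x_3,II(x'),B^*_\delta(x'))$ is a finite linear combination $\sum_{mn}c^h_{mn}(x_3)\,\xi_m(x')\xi_n(x')$ to which Fubini and the weak-$*$ convergence of $c^h_{mn}$ in $L^\infty(I)$ apply directly. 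Your route is thus somewhat more economical; the paper's $\eps$-square discretization is the same device used there in the lower-bound proof, so the authors simply reused it. Two small points worth making explicit if you were to write this out in full: (i) $B^*(x')$ vanishes on $\{II=0\}$ (as the minimizer of a positive definite quadratic form with zero data there) so that the cutoff $B^*_\delta\to B^*$ in $L^2$ is indeed by dominated convergence; and (ii) the application of \eqref{assumQh} requires $h\|G^h\|_{L^\infty}\to 0$, which holds here precisely because of the uniform $L^\infty$ bound on $\tilde d^h$ and $\nabla'\tilde d^h$ for fixed $\delta$.
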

\begin{proof}
\begin{enumerate}[1.]
\item \emph{Definition of the recovery sequence.} Since $I_0$ is continuous in $W^{2,2}(S)$ topology,  by Lemma \ref{lemma:density} we may assume  that $y \in \mathcal{A}(S)$. We proceed as in \cite{Schmidt-07}. First we take  arbitrary $B \in C^{\infty}(\bar{S},\mathbb{S}_2)$ such that $B=0$ in a neighborhood of $\{x \in S:II(x)=0 \}$.  We denote by $R(x') = (\nabla'u(x'),n(x'))$ and define the recovery sequence by
\begin{equation*}
y^{h,\eps}_{\delta}(x) = y(x') + h\left(x_3 + \alpha(x')) n(x') + (g(x')\cdot \nabla')y(x')\right)  + h^2 D^{h}(x',x_3),
\end{equation*}
where $\alpha,g$ are determined by the relation (\ref{eq:zzadnje}) and the correctors are of the form  $D^{h}(x',x_3)=\int_{0}^{x_3} R(x') d^{h}(x',t) dt$. The precise construction of $d^{h}$ is given in the next step.

The approximate strain for the sequence $y^{h}$ then equals
\begin{align*}
G^{h} &= \frac{1}{h}\left(R^t \nabla_{h}y^h - \id \right) = R^t \left(\nabla'\left((x_3+\alpha)n + (g \cdot\nabla')y \right) , R d^{h} \right) +hR^t(\nabla'{D^{h}},0).
\end{align*}
Taking only the symmetric part we obtain that
$$\sym G^{h} = \left(\begin{array}{cc}
x_3II(x') + B & \begin{array}{c} \frac{1}{2}(d_1^{h} -b_1) \\ \frac{1}{2}(d_2^{h} -b_2) \end{array} \\
 \begin{array}{cc} \frac{1}{2}(d_1^{h} -b_1)
& \frac{1}{2}(d_2^{h} -b_2)
 \end{array} & d_3^{h}
\end{array}
\right) +h \sym(R^t(\nabla'D^{h},0))
$$
where $b=-\left(\begin{array}{c}
\partial_1{\alpha} \\
\partial_{2}{\alpha}
\end{array} \right) + II g$.

\item \emph{Construction of the correctors.}  As in the proof of theorem \ref{theorem:low} for every $\eps>0$ we approximate $S$ by the set  $S^\eps=\bigcup_i D_i^{\eps}$ where $D_i^{\eps}$ are squares parallel to the axis whose edge length is equal to $\eps$ and $\lim_{\eps \to 0}\vert S \setminus S^\eps \vert = 0$.

On each square $D_i^{\eps}$ we define the average values of $B$, $II$ and $b$ by:
$$
\tilde{B}^{\eps} =\frac{1}{\vert D_i^{\eps}\vert} \int_{D_i^{\eps}} B(x') \ dx',\
\tilde{II}^{\eps}=\frac{1}{\vert D_i^{\eps}\vert} \int_{D_i^{\eps}} II(x') \ dx', \
\tilde{b}^{\eps} =\frac{1}{\vert D_i^{\eps}\vert} \int_{D_i^{\eps}} b(x')\ dx'.
$$
We extend the definition of these functions to the whole set $S$ by defining them to be zero on $S \setminus S^{\eps}$. It is easy to see that
\begin{equation}\label{igorrr1}
\tilde{B}^{\eps} \to B, \
\tilde{II}^{\eps} \to II,\
\tilde{b}^{\eps} \to b,
\end{equation}
strongly in $L^{2}(S)$  and that they are bounded in $L^{\infty}(S)$.

Since by definition $\tilde{II}^{\eps}$,$\tilde{B}^{\eps}$ and $\tilde{b}^{\eps}$ are constants on every $D_i^{\eps}$,  for each $h$ we define $d^{h,\eps}_{min}:S^{\eps} \to \mathbb{R}^3$  by (\ref{defQ2}).  Using (Q1) with the zero sequence on the right side we conclude that for each $h$ we have
\begin{equation}
\eta_1 \vert \tilde{B}^{\eps} + x_3 \tilde{II}^{\eps} + d_{min}^{h,\eps} \otimes e_3 + e_3 \otimes d_{min}^{h,\eps}\vert^2 \leq  \eta_2 \vert \tilde{B}^{\eps} + x_3 \tilde{II}^{\eps} \vert^2.
\end{equation}
Since both $(\tilde{B}^{\eps})_{\eps>0}$ and $(\tilde{II}^{\eps})_{\eps>0}$ are uniformly bounded we conclude that $d^{h,\eps}_{min}$ is uniformly bounded with respect to $\eps$ and $h$. We extend the definition of $d^{h,\eps}_{min}$ to the whole of $\Omega$ by assuming that $d^{h,\eps}_{min}=0$ on $S \setminus S^{\eps}$.

Moreover, we approximate $d^{h,\eps}_{min}$ by smooth functions $d^{h,\eps}_{\delta}$ depending on some positive parameter $\delta$. For each $h, \eps$, a.e. $x_3\in I$ and $\delta \leq \frac{\eps}{2}$ we construct smooth functions $d_{\delta}^{h,\eps }$ on $S^{\eps}$ such that:
\begin{enumerate}
\item[(i)] $d_{\delta}^{h,\eps} = d^{h,\eps}_{min}$ on a set $(x^{i,\eps}_{1}+\delta,x^{i,\eps}_{1}+\eps-\delta)\times (x^{i,\eps}_{2}+\delta,x^{i,\eps}_{2}+\eps-\delta)$ where $(x^{i,\eps}_{1},x^{i,\eps}_{2})$ are coordinates of the lower left corner of $D_i^{\eps}$;
\item[(ii)]  for each $h,\eps$ we have that $d^{h,\eps}_{\delta} \to d^{h,\eps}_{min}$ strongly in $L^2(\Omega)$ as $\delta \to 0$;
\item[(iii)] there is a positive constant $C>0$ independent of $\eps$ and $h$ such that $\Vert \nabla' d^{h,\eps}_{\delta}  \Vert_{L^\infty(\Omega)} \leq \frac{C}{\delta}$;
\item[(iv)] $d^{h,\eps}_{\delta}$ is uniformly bounded with respect to $h,\delta$ and $\eps$.
\end{enumerate}
We define the recovery sequence by
\begin{equation*}
y^{h,\eps}_{\delta}(x) = y(x') + h\left((x_3 + \alpha(x')) n(x') + (g(x')\cdot \nabla')y(x')\right)  + h^2 D^{h,\eps}_{\delta}(x',x_3),
\end{equation*}
where the correctors are of the form  $D^{h,\eps}_{\delta}(x',x_3)=\int_{0}^{x_3}
R(x') d^{h,\eps}_{\delta}(x',t) dt$. We denote by $G^{h,\eps}_{\delta}$ the approximate strain generated by $y^{h,\eps}_{\delta}$. The condition (iii) ensures the boundedness of $G^{h,\eps}_{\delta}$  as soon as we have the boundedness of $\frac{h}{\delta}$.


\item \emph{Passing  to the limit}.
Taking into account the frame indifference property  of $W^{h}$ and (\ref{assumQh}) we obtain that:
\begin{eqnarray*}
\lim_{h \to 0}  \frac{1}{h^2} \int_{\Omega}W^h(x_3,\nabla_{h} y^{h,\eps}_{\delta}) &=& \lim_{h \to 0}  \frac{1}{h^2} \int_{\Omega} W^{h}(x_3,\id+h G^{h,\eps}_{\delta} ) \\
&=& \lim_{h \to 0} \int_{\Omega} Q^{h}(x_3,G^{h,\eps}_{\delta}).
\end{eqnarray*}
Using this the boundedness of $G^{h,\eps}_{\delta}$ with respect to $\eps$ we obtain
$$\lim_{\eps \to 0} \lim_{\delta \to 0} \lim_{h\to 0} \int_{ (S \setminus S^{\eps}) \times I } Q^{h}(x_3,G^{h,\eps}_{\delta}) = 0.
$$
We  prove the convergence on sets $S^{\eps} \times I$. We define $\tilde{G}_{\delta}^{h,\eps}$ from $G^{h,\eps}_{\delta}$ by replacing $B,II$ and $b$ by $\tilde{B}^{\eps},\tilde{II}^{\eps}$ and $\tilde{b}^{\eps}$. Note that using  (Q2) we obtain that there is a constant $C>0$ independent of $h,\eps,\delta$  such that:
\begin{align*}
 \left\vert   \int_{S^{\eps} \times I} Q^{h}(x_3,G^{h,\eps}_{\delta}) -\int_{S^{\eps} \times I} Q^{h}(x_3,\tilde{G}^{h,\eps}_{\delta}) \right\vert \leq C\left(1+\frac{h}{\delta}\right) \Vert G^{h,\eps}_{\delta}- \tilde{G}^{h,\eps}_{\delta} \Vert_{L^2(S^{\eps}\times I)}.
\end{align*}
From the definition of the sequence $d^{h,\eps}_{\delta}$  we obtain that $Q^{h}(x_3,\tilde{G}^{h,\eps}_{\delta}) = Q^{h}_2(x_3,\tilde{II},\tilde{B})$ on every $D_i^{\eps}$ except on the band of the thickness $\delta$. For every $\delta$ we denote this set by $$S^{\eps}_{\delta} = \bigcup_{i}D_i \setminus (x^{i,\eps}_1 + \delta ,x^{i,\eps}_1+\eps- \delta ) \times (x^{i,\eps}_2 + \delta ,x^{i,\eps}_2+\eps- \delta ).$$ Thus, we obtain that for some $C>0$ independent of $h,\delta,\eps$
\begin{align*}
\left\vert \int_{S^\eps \times I}  Q^{h}(x_3,\tilde{G}^{h,\eps}_{\delta}) -Q^{h}_2(x_3,\tilde{II}^{\eps},\tilde{B}^{\eps}) dx \right\vert &=  \left\vert \int_{S^{\eps}_{\delta} \times I}  Q^{h}(x_3,\tilde{G}^{h,\eps}_{\delta}) -Q^{h}_2(x_3,\tilde{II}^{\eps},\tilde{B}^{\eps}) dx \right\vert \\
&\leq C  \int_{S^{\eps}_{\delta} \times I}   (\vert \tilde{G}^{h,\eps}_{\delta}\vert^2 + \vert x_3\tilde{II}^{\eps}+\tilde{B}^{\eps}\vert^2) dx \\
&\leq C \vert S^{\eps}_{\delta} \vert  \left( 1+\Vert d^{h,\eps}_{\delta}\Vert^2_{L^{\infty}} + \frac{h^2}{\delta^2}+ 2 \Vert x_3\tilde{II}^{\eps}+\tilde{B}^{\eps}\Vert^2_{L^{\infty}} \right).
\end{align*}
By the definition of $Q_2$ and (\ref{igorrr1}) it is easy to see that
$$\lim_{\eps \to 0} \lim_{h \to 0} \int_{S^{\eps} \times I}Q_2^{h}(x_3,\tilde{II}^{\eps},\tilde{B}^{\eps}) dx =  \int_{S \times I }Q_{2}(II,B) dx.
$$
We define the function
\begin{equation*}
g(\eps,\delta,h):= \left\vert \frac{1}{h^2} I^{h}(y^{h,\eps}_{\delta}) - \int_{\Omega} Q_{2}\left(II,B \right) dx \right\vert +
\Vert y^{h,\eps}_{\delta} -y\Vert_{L^2(\Omega)}+\frac{h}{\delta}.
\end{equation*}
Using the above estimates  we obtain that  $\lim_{\eps \to 0}\lim_{\delta \to 0} \lim_{h \to 0} g(\eps,\delta,h)=0$. Hence, by Attouch lemma, there are functions $\eps(h),\delta(h)$, such that $\eps(h),\delta(h) \to 0$ as $h \to 0$ and
$$\lim_{h \to 0}  g(\eps(h),\delta(h),h)=0.$$

We can choose $(B_k)_{k \in \N}\subset L^2(S;\R^{2 \times 2})$ smooth such that for every $k \in \N$ $B_k=0$ in a neighbourhood of the set $\{II=0\}$ and $B_k \to B_{min}$ in $L^2$,
where
$$ B_{min}(x')=\argmin_{B \in \mathbb{S}_2} \int_{I}  Q_2(x_3,II(x')) dx_3.$$
By the diagonalization  we obtain the result.

\end{enumerate}

\end{proof}
\begin{remark} \label{napomena1}
In \cite{Schmidt-07} it was proved that if we start from the layered materials (i.e., the stored energy density $W^{F}$, and thus its second derivative  at the identity $Q^{F}$, depends on $x_3$, but is fixed) then the energy density $\bar{Q}_2^{F}$ of $\Gamma$-limit has the form
$$\bar{Q}_2^{F}(A)= \min_{B\in S_2} \int_{I} Q_2 ^F  (x_3,B+x_3A),$$
where $Q_2^{F}$ is obtained from $Q^{F}$ by minimizing with respect to the third row and column as in the relation (\ref{defQ2}).
We have shown that in this more general case the limit energy density is determined by the quadratic form $Q_2$ which can be obtained as weak limit of quadratic forms $Q_2^h$ as $h$ goes to zero. Thus the effective behavior depends only on weak limit, i.e., averages of the oscillations in the direction of thickness. Because of this we can say that in this case there are no homogenization effects.
\end{remark}

\section{Periodic homogenization}
The result of the previous section is given in terms of general homogenization with respect to the   oscillations in the direction of thickness.  In this section we will assume that we have periodic oscillations in the direction of thickness together with the periodic oscillations in the in-plane directions. The reason why we study only periodic in-plane oscillations is the fact that we are not able to obtain the effective behavior for the case of general non periodic in-plane oscillations (see \cite{Vel13}). We will give only the sketches of the proofs since the basic ideas follow the ones in \cite{Horneuvel12,Vel13}.

We assume that the periodicity is given on two scales $\eps_1(h)$ and $\eps_2(h)$  where $\lim_{h \to 0}\eps_1(h) = \lim_{h \to 0}\eps_2(h)=0$ and use the notation  $Y^d=[0,1)^d$ where $d\in\{1,2,3\}$ for periodic cells. We denote by $\mathcal{Y}^d$ the sets $Y^d$ endowed with the torus topology.

We  give the assumptions on energy densities with periodically oscillating material.
 We assume that $W:\R^3\times\R^{3\times 3}\to [0,\infty]$
  is Borel measurable and that $W(\cdot, F)$ is $[0,1)^3$-periodic for all $F\in\R^{3\times 3}$.
  Furthermore, we assume that for a.e. $y \in [0,1)^3$ we have that $W(y,\cdot) \in \calW(\eta_1,\eta_2, \rho)$.  We also assume that there exists $Q:\R^3\times\R^{3\times 3}\to [0,\infty)$ such that, for a.e. $y \in [0,1)^3$ $Q(y,\cdot)$, is a quadratic form and the following is valid
 \begin{equation}\label{assumQh1}
    \ \forall \boldsymbol G\in\R^{3\times 3} \quad \esssup_{x_3\in I} |W(x_3,\id+\boldsymbol G)-Q(x_3,\boldsymbol G)|\leq r(|\boldsymbol G|) |\boldsymbol G|^2,
  \end{equation}
where $r:\R^+\to\R^+\cup\{+\infty\}$ is a monotone function  that satisfies $\lim_{\delta \to 0} r(\delta)= 0$. From the properties (W1)-(W4) we conclude that for a.e. $y \in [0,1)^3$ we have
\begin{equation} \label{svojstvoq}
 \eta_1 |\sym A|^2 \leq Q(y,A)=Q(y,\sym A) \leq \eta_2 |\sym A|^2, \forall A \in \R^{3 \times 3}.
 \end{equation}
\begin{definition}[Two-scale convergence with respect to two scales $\eps_1$ and $\eps_2$]
We say that a bounded sequence  $(u^h)_{h>0}$ in $L^2(\Omega)$ two-scale converges to $u \in L^2(\Omega \times Y^3)$
and we write $u^h(x) \stackrel{2}{\rightharpoonup} u(x,y) $  if
$$\lim_{h\to 0} \int_\Omega u^h(x) \psi\left(x,\frac{x'}{\eps_1(h)},\frac{x_3}{\eps_2(h)}\right) dx =
\iint_{\Omega \times \mathcal{Y}^3} u(x,y) \psi(x, y) dy dx,
$$
for all $\psi \in C^{\infty}_0(\Omega,\mathcal{C}(\mathcal{Y}^3))$. If,  in addition,
$\Vert u^h\Vert_{L^2(\Omega)}  \to  \Vert u \Vert_{L^2(\Omega \times \mathcal{Y}^3)}$ we say
that $u^h$ strongly two-scale converges to $u$ and write $u^h \stackrel{2}{\to} u$. For vector-valued
functions, two-scale convergence is defined componentwise.
\end{definition}
It is easy to see that all standard claims for two-scale convergence (such as compactness, lower semicontinuity of convex integrands)  are also valid in this case (for two-scale convergence see \cite{Allaire-92,Visintin-06,Visintin-07}).
Here we study two regimes in the periodic context.
\begin{enumerate}[1.]
\item  $\eps_1(h)=h^{\alpha+1}$ and $\eps_2(h) = h^{\alpha}$ for some $\alpha >  0$.
\item $\lim_{h \to 0}\frac{h}{\eps_1(h)}=1$ and $\lim_{h \to 0} \eps_2(h)=0$.
\end{enumerate}

\subsection{Heuristics}
We assume that scaling is given by $\eps_1(h)=h^{\alpha+1}$ and $\eps_2(h) = h^{\alpha}$ for $\alpha >  0$ and assume the following form for $u^h$
\begin{align*}
u^h\left(x,\frac{x'}{h^{\alpha+1}},\frac{x_3}{h^{\alpha}}\right) = u^0(x') + h u^1\left(x,\frac{x'}{h^{\alpha+1}},\frac{x_3}{h^{\alpha}}\right)
+  h^{\alpha+1} u^2\left(x,\frac{x'}{h^{\alpha+1}},\frac{x_3}{h^{\alpha}}\right).
\end{align*}
We derive that
\begin{align*}
\nabla_h u^h = (\nabla'u^0\vert\partial_{x_3}{u^1}) +  \frac{1}{h^{\alpha}}\nabla_{y}u^1 + \nabla_y u^2.
\end{align*}
From the assumptions on the relations between $h$, $\eps_1(h)$ and $\eps_2(h)$ and the boundedness of $\nabla_h u^h$ we derive that $\nabla_y u^1=0$. Thus, $u^1(x,y)=u^1(x)$ and  $\nabla_h u \to (\nabla'u^0\vert\partial_{x_3}{u^1}) + \nabla_y u^2$.

In the second case we assume that $u^h$ equals
\begin{align*}
u^h\left(x,\frac{x'}{\eps_1(h)},\frac{x_3}{\eps_2(h)}\right) = u^0(x') + h u^1\left(x,\frac{x'}{\eps_1(h)},\frac{x_3}{\eps_2(h)}\right)
+  h \eps_2(h) u^2\left(x,\frac{x'}{\eps_1(h)},\frac{x_3}{\eps_2(h)}\right).
\end{align*}

Then the scaled gradient equals
\begin{align*}
\nabla_h u^h = \left(\nabla'u^0 \vert \partial_{x_3}{u^1}\right) + \frac{h}{\eps_1(h)}\left(\nabla_y'u^1\vert 0 \right) + \frac{1}{\eps_2(h)}\left(0\vert 0 \vert \partial_{y_3} u^1\right) + \frac{ h \eps_2(h)}{\eps_1(h)}\left(\nabla_{y}' u^2 \vert 0 \right) +\left(0 \vert 0 \vert \partial_{y_3} u^2 \right) .
\end{align*}
Since $\nabla_h u^h$ is bounded we derive that $u^1(x,y)=u^1(x,y')$ and that the two-scale limit of scaled gradient equals
$$\left(\nabla'u^0 \vert  0 \right) +\left( \nabla_{y'}u^1\vert \partial_{x_3}{u^1} \right)  +\left(0 \vert \partial_{y_3} u^2 \right). $$

We give the proofs of these claims.

\begin{proposition}\label{prop:two-scale}
Let $(u^h)_{h>0}$ be a weakly convergent sequence in $H^1(\Omega,\mathbb{R}^3)$ with limit $u$ and assume that $(\Vert \nabla_h u^h\Vert_{L^2(\Omega)})_{h>0}$ is bounded.
\begin{enumerate}[1.]
\item  Suppose that $\eps_{1}(h)=h^{\alpha+1}$ and $\eps_2(h)=h^{\alpha}$ for some $\alpha>0$.
 Then $u$ is independent of $x_3$ and there are functions $u^1 \in H^1(\Omega;\mathbb{R}^3)$ and $u^2 \in L^2(\Omega;H^1( \mathcal{Y}^3;\mathbb{R}^3))$
such that
$$\nabla_h u^h \stackrel{2}{\rightharpoonup} (\nabla'u^0\vert\partial_{x_3}{u^1})
+ \nabla_y u^2,
$$
weakly two-scale in $L^2(\Omega\times Y^3;\mathbb{R}^3)$ on a subsequence.
\item Suppose that $\lim_{h \to 0}\frac{h}{\eps_1(h)}=1$ and $\lim_{h \to 0} \eps_2(h)=0$.
 Then $u$ is independent of $x_3$ and there are functions $u^1 \in L^2(\Omega;H^1(\mathcal{Y}^2;\mathbb{R}^3))$ and $d\in L^2(\Omega \times Y^3;\mathbb{R}^3)$ such that $\int_{Y} d(x,y',y_3) dy_3= 0$ for a.e. $(x,y')\in \Omega \times Y^2$
and
$$\nabla_h u^h \stackrel{2}{\rightharpoonup} \left(\nabla'u^0 \vert  0 \right) + (\nabla_{y'} u^1 \vert \partial_{x_3} u^1 )+  \left(0 \vert 0 \vert d \right),
$$
weakly two-scale in $L^2(\Omega\times Y^3;\mathbb{R}^3)$ on a subsequence.
\end{enumerate}
\end{proposition}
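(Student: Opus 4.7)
The plan is to establish both cases by a two-scale compactness argument followed by an identification of the limit via carefully chosen test functions. First, in both regimes $\partial_3 u^h = h(\nabla_h u^h)_3 \to 0$ strongly in $L^2(\Omega)$, so $\partial_3 u = 0$ and $u$ is independent of $x_3$. Since $\|\nabla_h u^h\|_{L^2}$ is bounded, the compactness theorem for two-scale convergence with the two scales $\eps_1$ and $\eps_2$ yields a subsequence along which $\nabla_h u^h \stackrel{2}{\rightharpoonup} \xi$ for some $\xi \in L^2(\Omega \times Y^3; \R^{3\times 3})$. It remains to identify $\xi$ by passing to the limit in integrals of the form $\int \nabla_h u^h \cdot \psi_h\,dx$, where $\psi_h(x) = \psi(x, x'/\eps_1, x_3/\eps_2)$; integration by parts transfers derivatives onto $\psi_h$ and produces unbounded prefactors $1/\eps_1$ and $1/(h\eps_2)$ which must be cancelled by an appropriate algebraic structure on $\psi$.

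For Case 1 the key observation is the identity $\eps_1 = h\eps_2 = h^{\alpha+1}$, so that for smooth $\psi$
\begin{equation*}
 \partial_1\psi_{h,1} + \partial_2\psi_{h,2} + \tfrac{1}{h}\partial_3\psi_{h,3} = (\partial_{x_1}\psi_1 + \partial_{x_2}\psi_2)_h + \tfrac{1}{h}(\partial_{x_3}\psi_3)_h + \tfrac{1}{h^{\alpha+1}}(\diver_y \psi)_h.
\end{equation*}
Testing with $\psi$ satisfying $\diver_y \psi = 0$ on $\mathcal{Y}^3$ and with $y$-mean zero eliminates the singular term, and passing to the limit yields orthogonality of $\xi$ to all such $y$-divergence-free test fields. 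By the Weyl decomposition on $\mathcal{Y}^3$ this gives $\xi = \bar\xi(x) + \nabla_y u^2$ for some $u^2 \in L^2(\Omega; H^1(\mathcal{Y}^3; \R^3))$. The mean $\bar\xi$ is read off by testing with $y$-independent $\psi$: the first two columns are $\partial_{x_\alpha} u^0$, while the third column is the weak $L^2$-limit of $(1/h)\partial_3 u^h$, shown to be of the form $\partial_{x_3} u^1$ with $u^1 \in H^1(\Omega; \R^3)$ by applying a cross-sectional Poincar\'e inequality to $v^h := (u^h - \bar u^h)/h$.

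For Case 2 the relations $h/\eps_1 \to 1$ and $\eps_2 \to 0$ separate the prefactors $1/\eps_1 \sim 1/h$ and $1/(h\eps_2) \to \infty$, so a single $\diver_y \psi = 0$ condition no longer cancels both. Instead we use two families of test functions: first, $\psi = (\psi_1, \psi_2, 0)$ independent of $y_3$ with $\diver_{y'}\psi' = 0$ on $\mathcal{Y}^2$, which identifies the first two columns of $\xi$ as $\partial_{x_\alpha} u^0 + \partial_{y_\alpha} u^1$ for some $u^1 \in L^2(\Omega; H^1(\mathcal{Y}^2; \R^3))$; and second, $\psi = (0, 0, \psi_3(x, y_3))$ with $\int_Y \psi_3\,dy_3 = 0$ and $\psi_3$ independent of $x_3$, which identifies the $y_3$-varying part of $\xi_3$ as a function $d(x, y', y_3)$ satisfying $\int_Y d\,dy_3 = 0$ automatically from the mean-zero constraint. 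A further test against $y_3$-derivatives of admissible $\psi$ verifies that $u^1$ depends only on $(x, y')$, and the Poincar\'e argument of Case 1 identifies the $y_3$-mean of $\xi_3$ as $\partial_{x_3} u^1$.

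The main obstacle is the algebraic balancing of the singular prefactors $1/\eps_1, 1/(h\eps_2), 1/h$ arising from integration by parts: in Case 1 they combine into the single expression $h^{-(\alpha+1)}\diver_y$ thanks to the identity $\eps_1 = h\eps_2$, while in Case 2 no such identity holds and the $y_3$-direction must be handled separately. A secondary but essential point is lifting the weak $L^2$-limit of $(1/h)\partial_3 u^h$ to $\partial_{x_3}$ of a function with the claimed regularity ($H^1(\Omega; \R^3)$ in Case 1, and $\partial_{x_3} u^1 \in L^2$ with $u^1 \in L^2(\Omega; H^1(\mathcal{Y}^2; \R^3))$ in Case 2), which relies on the uniform $H^1$-bound on $u^h$ together with Poincar\'e on cross-sections; throughout, the argument closely parallels the constructions in \cite{Horneuvel12,Vel13}.
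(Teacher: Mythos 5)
The overall strategy (two-scale compactness, test against divergence-free fields, identify the mean and fluctuation separately) is indeed the paper's strategy, and your display for the $h$-divergence of $\psi_h$ in Case~1 is correct. However, there is a genuine gap precisely at the point you dismiss as "passing to the limit."

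In Case~1, imposing $\diver_y\psi=0$ only removes the $h^{-(\alpha+1)}(\diver_y\psi)_h$ term. The term $\tfrac{1}{h}(\partial_{x_3}\psi_3)_h$ remains, and it is \emph{not} harmless: even after normalizing so that $u^0=u^1=0$, you only know $\tilde u^h/h\rightharpoonup 0$ weakly in $L^2$, and $(\partial_{x_3}\psi_3)_h$ is a rapidly oscillating (not weakly convergent, only two-scale convergent) sequence, so $\tfrac{1}{h}\int_\Omega u^h(\partial_{x_3}\psi_3)_h\,dx$ does not obviously go to zero. This is exactly where the paper's proof does real work: since $\diver_y\Psi=0$ and $\int_{Y^3}\Psi\,dy=0$, one writes $\Psi=\curl_y\tilde\Psi$, so $\Psi_3=\partial_{y_1}\tilde\Psi_2-\partial_{y_2}\tilde\Psi_1$, and then converts each $\partial_{y_\alpha}$ evaluated at $x'/\eps_1$ into $\eps_1(\partial_\alpha-\partial_{x_\alpha})$. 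After one more integration by parts in $x_\alpha$ this produces the prefactor $\eps_1/h=h^\alpha$ multiplying terms bounded in $L^2$ (namely $u^h$ and $\partial_\alpha u^h$), which finally gives $\tfrac{1}{h}\int_\Omega u^h(\partial_{x_3}\psi_3)_h\,dx\to 0$. Without this curl-potential/cross-derivative step your orthogonality relation is not established.

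In Case~2, the same issue resurfaces in a different form. Because $1/h$ and $1/\eps_1$ have the \emph{same} order, the terms $\tfrac{1}{h}\partial_{x_3}\psi_3$ and $\tfrac{1}{\eps_1}(\partial_{y_1}\psi_1+\partial_{y_2}\psi_2)$ must be cancelled \emph{together}, which is why the paper takes $\psi$ independent of $y_3$ and imposes the anisotropic constraint $\partial_{y_1}\Psi_1+\partial_{y_2}\Psi_2+\partial_{x_3}\Psi_3=0$; this coupling is what forces the limit to have the specific matched structure $(\nabla_{y'}u^1\mid\partial_{x_3}u^1)$ with the \emph{same} potential $u^1$. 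Your proposal, using $\psi=(\psi_1,\psi_2,0)$ with $\diver_{y'}\psi'=0$ and, separately, a Poincar\'e argument for the $y_3$-mean of $\xi_3$, produces two a priori unrelated functions and leaves the coincidence of the "Poincar\'e $u^1$" with the "$y'$-potential $u^1$" as an unproved identification. Closing that gap essentially amounts to reproducing the paper's coupled divergence-free test.

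Minor remark: your claim that $\diver_y\psi=0$ together with $y$-mean zero gives "orthogonality of $\xi$" and then the Weyl/Helmholtz decomposition only yields the form $\bar\xi(x)+\nabla_y u^2$ once the $\tfrac{1}{h}$-order contribution has been disposed of; the logical order in your write-up obscures that this is a prerequisite rather than a consequence.
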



\begin{proof}
We follow the ideas in \cite{Neukamm-10}, Theorem 6.3.3.  and \cite{Horneuvel12}. We define  $\tilde{u}^h=u^h-\bar{u}^h$. Since $\tilde{u}^h$ has a zero mean value,  by the
Poincar\'{e}'s inequality, there is a  constant $C>0$ independent of $h$ such that
$$\int_I \vert \tilde{u}^h \vert^2 dx_3 \leq C \int_{I} \vert\partial_{x_3}{u^h} \vert^2 dx_3,$$
for a.e. $x' \in \omega$.  Integrating over $\omega$ we obtain that
\begin{equation}\label{ineq:pnc}
\int_\Omega \vert \tilde{u}^h \vert^2 dx_3 \leq C \int_{\Omega} \vert\partial_{x_3}{u^h} \vert^2 dx_3.
\end{equation}
Since $\nabla_h u^h$ is bounded we derive that $\partial_3{u^h} \to 0$ in $L^2(\Omega;\mathbb{R}^3)$. Thus, $\tilde{u}^h \to 0$ strongly  in $L^2(\Omega)$. Since  $\bar{u}^h $ weakly converges in $H^1(S;\R^3)$ we conclude that $u^h$ converges to some $u^0 \in H^1(S;\R^3)$.

From (\ref{ineq:pnc}) we obtain that
\begin{equation}
\left\Vert \frac{1}{h}\tilde{u}^h \right\Vert_{L^2(\Omega)}  \leq
C \left\Vert \frac{1}{h} \partial_3{u^h} \right\Vert_{L^2(\Omega)}.
\end{equation}
Since $(\nabla_h u^h)_{h>0}$ is bounded in $L^2$, there is a function $u^1 \in L^2(\Omega;\R^3)$ with $\partial_3 u^1 \in L^2(\Omega;\R^3)$ and $\bar{u}^1=0$ such that  $\frac{1}{h}(\tilde{u}^h) \rightharpoonup u^1$ and $\frac{1}{h} \partial_3{\tilde{u}^h} \rightharpoonup \partial_3{u^1}$ converge weakly in $L^2$ on a subsequence.

Without loss of generality, for subsequent analysis we will assume that $u^0=u^1=0$ (otherwise we subtract from $u^h$ the term $u^0+hu^{1,h}$, where
$u^{1,h}$ is smooth and $\|u^{1,h}-u^1\|_{L^2} \to 0$, $\|\partial_3 u^{1,h}-\partial_3u^1\|_{L^2} \to 0$, $h \|\partial_{\alpha} u^{1,h}\|_{L^2} \to 0$).

\begin{enumerate}[1.]
\item    Assuming that $\nabla_h u^h \to 0$ in $L^2(\Omega)$ it remains to prove that there is a function $u^2 \in L^2(\Omega;H^1(\mathcal{Y}^3;\R^3) )$ such that
$$\nabla_h u^h \stackrel{2}{\rightharpoonup}  \nabla_{y} u^2, $$
weakly two-scale in $L^2(\Omega\times \mathcal{Y}^3;\mathbb{R}^3)$ on a subsequence.

Since bounded sequences are weakly two-scale compact, there is a function $U \in L^2(\Omega\times Y^3;\R^{3 \times 3})$ such that $\nabla_h u^h \stackrel{2}{\rightharpoonup} U$, i.e.,
$$
\iint_{\Omega \times Y^3}  U(x,y) \cdot \Psi(x,y) dy dx =
\lim_{h \to 0} \int_{\Omega} \nabla_h u^h  \cdot \Psi\left(x,\frac{x'}{h^{\alpha+1}},\frac{x_3}{h^{\alpha}}\right)  dx.
$$
Moreover, we can conclude that $\int_{Y^3} U(x,y) dy=0$ for a.e. $x \in \Omega$.
By taking the test functions $\Psi \in C^{\infty}_0(\Omega,C^{\infty}(\mathcal{Y}^3;\R^{3 \times 3}))$ such that $\diver_{y} \Psi = 0$  and $\int_{Y^3} \Psi(x,y) dy=0$ for every $x \in \Omega$, after partial integration we obtain that
\begin{align*}
\int_{\Omega}  \nabla_h u^h \cdot \Psi dx &= \int_{\Omega} u^h \cdot \left(\partial_{x_1}{\Psi_1} + \partial_{x_2}{\Psi_2} +\frac{1}{h}\partial_{x_3}{\Psi_3} \right)dx \\
&+ \frac{1}{h^{\alpha +1}} \int_{\Omega} u^h \cdot \left(  \partial_{y_1}\Psi_1 +\partial_{y_2}\Psi_2  +  \partial_{y_3}\Psi_3 \right) dx.
\end{align*}
The second term is equal to zero. Taking the limit as $h \to 0$ and using that $u^h \to 0$ in $L^2(\Omega)$ we conclude
\begin{equation*}
\lim_{h \to 0}\int_{\Omega}  \nabla_h u^h \cdot \Psi dx = \lim_{h \to 0} \frac{1}{h}\int_{\Omega} u^h \cdot \partial_{x_3}{\Psi_3}dx.
\end{equation*}
 We know that if $\diver_y\Psi = 0$ and $\int_{Y^3} \Psi(x,y) dy$ for every $x \in \Omega$ then  there is a function $\tilde{\Psi} \in C_0^{\infty}(\Omega;C^{\infty}( \mathcal{Y}^3;\R^{3 \times 3}))$ such that $\curl_y\tilde{\Psi}= \Psi$ (this can be proved by Fourier transform). Thus, we obtain that
\begin{align*}
\lim_{h \to 0}\frac{1}{h} \int_{\Omega}  u^h \cdot (\partial_{x_3} \Psi) dx &= \frac{1}{h} \int_{\Omega}  u^h \cdot (\partial_{x_3} (\partial_{y_1}\tilde{\Psi}_2 - \partial_{y_2}\tilde{\Psi}_1 )) \\
&= \frac{1}{h} \int_{\Omega}  u^h \cdot ( \partial_{y_1}\partial_{x_3}\tilde{\Psi}_2 - \partial_{y_2}\partial_{x_3} \tilde{\Psi}_1 )) dx \\
&= h^\alpha  \int_{\Omega}  u^h \cdot \left( \frac{\partial_{y_1}\partial_{x_3}\tilde{\Psi}_2}{h^{\alpha+1}}+ \partial_{x_1}\partial_{x_3}\tilde{\Psi}_2 - \frac{\partial_{y_2}\partial_{x_3} \tilde{\Psi}_1 }{h^{\alpha+1}}  - \partial_{x_2}\partial_{x_3}\tilde{\Psi}_1\right) \\
&- h^{\alpha } \int_{\Omega} u^h \cdot \left( \partial_{x_1}\partial_{x_3}\tilde{\Psi}_2 - \partial_{x_2}\partial_{x_3}\tilde{\Psi}_1 \right) \\
&=h^{\alpha} \int_{\Omega} u^h \cdot \left(\partial_1\partial_{x_3}\tilde{\Psi}_2 - \partial_2{\partial_{x_3}\tilde{\Psi}_1}\right) - h^{\alpha } \int_{\Omega} u^h \cdot \left( \partial_{x_1}\partial_{x_3}\tilde{\Psi}_2 - \partial_{x_2}\partial_{x_3}\tilde{\Psi}_1 \right) \\
&=h^{\alpha} \int_{\Omega} \partial_1{u^h} \cdot \partial_{x_3}\tilde{\Psi}_2
 - h^{\alpha} \int_{\Omega} \partial_2{u^h} \cdot \partial_{x_3}\tilde{\Psi}_1 \\
 &-  h^{\alpha } \int_{\Omega} u^h \cdot \left( \partial_{x_1}\partial_{x_3}\tilde{\Psi}_2 - \partial_{x_2}\partial_{x_3}\tilde{\Psi}_1 \right),
 \end{align*}
which converges to zero as $h \to 0$.
Thus, we obtained that
$$
\iint_{\Omega \times Y^3}  U(x,y) \cdot \Psi(x,y) dy dx =
\lim_{h \to 0} \int_{\Omega} \nabla_h u^h  \cdot \Psi\left(x,\frac{x'}{h^{\alpha+1}},\frac{x_3}{h^{\alpha}}\right)  dx = 0,
$$
for all $\Psi \in C_0^{\infty}(\Omega,C^{\infty}(\mathcal{Y}^3))$ such that $\diver_y \Psi=0$ and $\int_{Y^3} \Psi(x,y) dy=0$ for every $x \in \Omega$. Hence   $U$ is a gradient map with respect to $y$, i.e., there is a function $u^2 \in L^2(\Omega;H^1( \mathcal{Y}^3;\R^3))$ such that
$U=\nabla_y u^2$ (this can be proved by Fourier transform).

\item Again, there is a function $U \in L^2(\Omega\times Y^3;\R^{3 \times 3})$ such that $\int_{Y^3} U(x,y) dy=0$ for a.e. $x \in \Omega$ and $\nabla_h u^h \stackrel{2}{\rightharpoonup} U$, i.e.,
$$
\iint_{\Omega \times Y^3}  U(x,y) \cdot \Psi(x,y) dy dx =
\lim_{h \to 0} \int_{\Omega} \nabla_h u^h  \cdot \Psi\left(x,\frac{x'}{\eps_1(h)},\frac{x_3}{\eps_2(h)}\right)  dx.
$$
We take the test function $\Psi$ independent of $y_3$ variable. Furthermore, we assume that
$$\partial_{y_1}\Psi_1 + \partial_{y_2} \Psi_2 + \partial_{x_3} \Psi_3 =0.$$
We obtain that
\begin{eqnarray*}
\int_{\Omega} \nabla_h u^h \cdot \Psi &=& \int_{\Omega} u^h \cdot ( \partial_{x_1}\Psi_1 + \partial_{x_2}\Psi_2 ) dx\\ & & + \int_{\Omega} u^h \cdot \left( \frac{1}{\eps_1(h)}\partial_{y_1}\Psi_1 + \frac{1}{\eps_1(h)} \partial_{y_2}\Psi_2 + \frac{1}{h} \partial_{x_3}\Psi_3 \right).
\end{eqnarray*}
Taking the limit and using the fact that $u^h \to 0$ strongly in $L^2(\Omega)$ we conclude
$$\int_{\Omega \times Y^2} \left( \int_{Y^1} U(x,y',y_3) dy_3 \right) \Psi(x,y') dy' dx = 0. $$
 From this we conclude, as in \cite[Theorem 6.3.3]{Neukamm-10} that there is a function $u^1 \in L^2(\Omega; H^1(\mathcal{Y}^2;\R^3))$ such that
$$\int_{Y^1} U(x,y',y_3) dy_3 = \left( \nabla_y u^1 \vert \partial_{x_3} u^1 \right).$$
Denote the first two columns of $U$ by $U_1$ and $U_2$. It remains to show that $U_1$ and $U_2$ are independent of $y_3$. To prove this take any test function $\Psi$ and note that
$$\int_{\Omega} \partial_\alpha u^{h} \cdot  \eps_2(h) \partial_{3} \Psi dx  \to \iint_{ \Omega \times Y^3} U_\alpha \cdot \partial_{y_3}{\Psi} dy dx.$$
On the other hand, by partial integration we derive that
\begin{align*}
\eps_2(h)\int_{\Omega} \partial_{\alpha} u^{h} \cdot   \partial_{3}  \Psi dx &=
\frac{h \eps_2(h)}{\eps_1(h)}  \int_{\Omega} \frac{1}{h}\partial_{x_3} u^{h} \cdot   \left( \eps_1(h) \partial_{x_\alpha}  \Psi +  \partial_{y_\alpha}\Psi \right)dx,
\end{align*}
which converges to zero as $h \to 0$. Thus we derived that
$$\iint_{ \Omega \times Y^3} U_\alpha \cdot \partial_3{\Psi} dy dx = 0.$$
From this we see that $U_1$ and $U_2$ are independent of $y_3$ and the claim is proved.

\end{enumerate}
\end{proof}

We define the limit energy density
\begin{definition}{(Relaxation formula)}\label{defq}
We define $Q_0^{p}:\R^{2 \times 2} \to [0,\infty)$ in the following two cases
\begin{enumerate}
\item ($\eps_1(h)=h^{\alpha+1}, \eps_2(h)=h^{\alpha}$)
$$Q_0^{p}(A):=\inf_{B} \inf_{d}\int_{I} \left(\inf_{\phi} \int_{Y^3} Q\left(y,\iota(B+x_3 A) +(0\vert 0 \vert d)+ \nabla_y\phi\right) dy\right) dx_3,$$
where the infimum  is taken over $B \in S_2$, $d \in L^2(I;\R^3)$ and $\phi \in H^1(\mathcal{Y}^3; \R^3)$.
\item ($\lim_{h \to 0} \frac{\eps_1(h)}{h}=1$, $\lim_{h \to 0}\eps_2(h)=0$)
$$Q_0^p(A):=\inf_{B} \inf_{\phi} \int_{I \times Y^2} \left( \inf_{d} \int_{Y^1} Q\left(y,\iota(B+x_3 A) + \left(\nabla'_y \phi\vert  \partial_{x_3} \phi \right) + (0\vert0\vert d)\right)dy_3 \right) dy'dx_3, $$
where the infimum is taken over $B \in S_2$, $\phi \in H^1 (I \times \mathcal{Y}^2; \R^3)$, $d \in L^2(Y^1;\R^3)$ such that $\int_{Y^1} d(y_3) dy_3=0$.
\end{enumerate}
\end{definition}
\begin{proposition}
If $Q:[0,1)^3 \times \R^{3 \times 3} \to [0,\infty) $ is such that $Q(y,\cdot)$ is a quadratic form for a.e. $y  \in [0,1)^3$ that satisfies (\ref{svojstvoq}), then $Q_0^p$ defined in Definition \ref{defq} is a quadratic form that satisfies $Q_0^p(A)=Q_0^p(\sym A)$, for all $A \in \R^{2 \times 2}$. Moreover, $Q_0^p$ is positive definite on symmetric matrices.
\end{proposition}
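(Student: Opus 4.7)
The plan is to handle the three claims---$Q_0^p$ sees only $\sym A$, $Q_0^p$ is a quadratic form, $Q_0^p$ is positive definite on $\mathbb{S}_2$---in that order. The reduction $Q_0^p(A)=Q_0^p(\sym A)$ is immediate from Lemma~\ref{lem:111}(i) applied pointwise in $y$: $Q(y,M)=Q(y,\sym M)$, and since $\sym\iota(M)=\iota(\sym M)$ and $B\in\mathbb{S}_2$ is already symmetric, the symmetric part of the argument of $Q$ depends on $A$ only through $\sym A$ in both regimes~(1) and~(2). For the quadratic form property, I would collect $(B,d,\phi)$ into a single element $v$ of a fixed linear space $\mathcal V$---unconstrained in case~(1), with the linear mean-zero condition $\int_{Y^1}d\,dy_3=0$ in case~(2)---and view the integrand $F(A,v)$ as a map $F:\R^{2\times 2}\times\mathcal V\to[0,\infty)$. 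Since $Q(y,\cdot)$ is quadratic and the maps $(A,v)\mapsto(\text{argument of }Q)$ are linear in $(A,v)$, $F$ is itself a quadratic form. The scaling $Q_0^p(\lambda A)=\lambda^2 Q_0^p(A)$ follows by scaling the admissible $v$ by $\lambda$, and for $\varepsilon$-minimizers $v_i$ of $Q_0^p(A_i)$, $i=1,2$, the parallelogram identity for $F$,
\[F(A_1+A_2,v_1+v_2)+F(A_1-A_2,v_1-v_2)=2F(A_1,v_1)+2F(A_2,v_2),\]
gives $Q_0^p(A_1+A_2)+Q_0^p(A_1-A_2)\le 2Q_0^p(A_1)+2Q_0^p(A_2)+O(\varepsilon)$. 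The reverse inequality comes from $\varepsilon$-minimizers $w_j$ for $A_1\pm A_2$ tested against $A_1,A_2$ with $(w_1\pm w_2)/2$. Combined with $Q_0^p(0)=0$ (trivial), this identifies $Q_0^p$ as a quadratic form.

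Positive definiteness is the substantive step. Using $Q(y,M)\ge\eta_1|\sym M|^2$ from \eqref{svojstvoq} and retaining only the $(\alpha,\beta)\in\{1,2\}^2$ entries of $\sym M$ (which automatically discards the $(0|0|d)$ contribution), one obtains in case~(1)
\[Q(y,\iota(B+x_3 A)+(0|0|d)+\nabla_y\phi)\ge\eta_1\sum_{\alpha,\beta=1}^2\Bigl(B_{\alpha\beta}+x_3(\sym A)_{\alpha\beta}+\tfrac12(\partial_{y_\alpha}\phi_\beta+\partial_{y_\beta}\phi_\alpha)\Bigr)^2.\]
Periodicity of $\phi$ forces $\int_{Y^3}\partial_{y_\gamma}\phi_\delta\,dy=0$, so Jensen's inequality applied entrywise and then summed gives
\[\int_{Y^3} Q(y,\cdots)\,dy\ge\eta_1\,|B+x_3\sym A|^2,\]
and integrating in $x_3\in I$ with $\int_I x_3\,dx_3=0$, $\int_I x_3^2\,dx_3=1/12$ produces $\eta_1(|B|^2+\tfrac{1}{12}|\sym A|^2)$; minimizing in $B\in\mathbb{S}_2$ yields $Q_0^p(A)\ge\tfrac{\eta_1}{12}|\sym A|^2$. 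For case~(2) the same computation applies: the upper-left $2\times 2$ block of the argument of $Q$ does not involve $d$ or $y_3$, so the inner $\inf_d\int_{Y^1}$ is inert, while periodicity of $\phi$ in $y'\in Y^2$ still kills the corrector under $\int_{Y^2}$. Hence $Q_0^p(A)\ge\tfrac{\eta_1}{12}|\sym A|^2>0$ for $A\in\mathbb{S}_2\setminus\{0\}$.

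The main obstacle I anticipate is purely organizational---making sure the parallelogram argument is not broken by the nested structure of the infima. This requires rewriting $Q_0^p$ as a single infimum over a linear space, which is legitimate because all intermediate function spaces and the mean-zero constraint on $d$ in case~(2) are linear. Everything else is direct: the reduction to the upper $2\times 2$ block, the Jensen estimate exploiting periodicity, and the $L^2(I)$-orthogonality of constants to linear functions (yielding the $\tfrac{1}{12}$) are all elementary, and it is precisely at this last step that the hypothesis $A\in\mathbb{S}_2$ is used.
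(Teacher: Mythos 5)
Your proof is correct, and the underlying structural ideas coincide with the paper's: both treat $Q_0^p$ as the infimum of a quadratic form over a linear subspace of fields $(B,d,\phi)$, and both extract coercivity from the lower bound $Q\ge\eta_1|\sym\cdot|^2$ restricted to the in-plane block, periodicity of $\phi$, and the $L^2(I)$-orthogonality of $1$ and $x_3$ (yielding the factor $1/12$). The technical execution differs, though. For quadraticity, the paper passes to a single infimum and observes that $\mathcal V=\{\iota(B)+\sym(0|0|d)+\sym\nabla_y\phi\}$ is a \emph{closed} subspace of $L^2(I\times Y^3;\QQ_3)$, then interprets the minimization as the orthogonal projection of $\iota(x_3\sym A)$ onto $\mathcal V$ in the $Q$-weighted inner product; quadraticity (and uniqueness of the minimizer) follow because the projection is a linear map. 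You instead use the parallelogram identity with $\varepsilon$-minimizers, which is more elementary and in particular does not require knowing that the subspace is closed — a small robustness gain, at the cost of not getting uniqueness of the minimizer (which the proposition does not claim anyway). For positive definiteness, the paper phrases the key step as orthogonality of $x_3\sym A$ to the subspace $\mathcal V_1=\{B+\sym\nabla'_y\phi\}$ in the standard $L^2(I\times\mathcal Y^2;\QQ_2)$ inner product; your Jensen-plus-mean-zero computation is the same estimate written out explicitly and is, if anything, easier to check. Both versions are sound; note also that, like the paper, you only spell out case (1) and correctly indicate that the mean-zero linear constraint on $d$ and the $y'$-periodicity of $\phi$ make case (2) go through verbatim.
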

\begin{proof}
We will only prove the case when $\eps_1(h)=h^{\alpha+1}, \eps_2(h)=h^{\alpha}$. The other case goes in the similar way. Notice that
$Q_0^p$ can be written in the following way
$$Q_0^p(A):=\inf_{B,\phi,d}\int_{I \times Y^3}  Q\left(y,\iota(B+x_3 A) +(0\vert 0 \vert d)+ \nabla_y\phi\right) dy dx_3,$$
where the infimum is taken over $B \in S_2$, $d \in L^2(I;\R^3)$, $\phi \in L^2(I; H^1(\mathcal{Y}^3;\R^3))$. If we denote by $\mathcal{V}$ the closed subspace of $L^2(I \times Y^3;S_3)$ given by
$$ \mathcal{V}=\{\iota(B)+\sym (0\vert 0 \vert d)+ \sym \nabla_y\phi: B \in S_2, d \in L^2(I;\R^3), \phi \in L^2(I; H^1(\mathcal{Y}^3;\R^3))\}, $$
then we can interpret minimizing $(B,\phi,d)$ as the projection of $\iota(\sym (x_3 A))$ on $\mathcal{V}$ in the norm defined by the square root of the integral of the function $Q$. From this we have the uniqueness of the minimizer (notice that the decomposition in the definition of $\mathcal{V}$ is orthogonal with respect to the standard scalar product in $L^2(I \times Y^3;S_3)$) and also the quadraticity of $Q_0^p$. The coercivity of $Q_0^p$ follows from the coercivity of $Q$ and orthogonality of $\sym (x_3 A)$ on the subspace $\mathcal{V}_1$ of
$L^2(I \times \mathcal{Y}^2;S_2)$ defined by
$$ \mathcal{V}_1=\{B+\sym \nabla_y' \phi: B \in S_2, \phi \in  L^2 (I; H^1(Y^3;\R^2))\},$$
in the standard scalar product on $L^2(I \times \mathcal{Y}^2;S_2)$.
\end{proof}

We define the energy functionals by
$$I^{h,\eps_1(h),\eps_2(h)}(u^h) = \int_{\Omega}W\left(\frac{x'}{\eps_1(h)},\frac{x_3}{\eps_2(h)},\nabla_h u^h  \right)dx,$$
and the limit functional by
$$I_0^p(u)=\left\{ \begin{array}{c}
\int_\omega Q_0^p(II(x')) dx'  \quad \mbox{ if }u \in W^{2,2}_{iso} \\
+\infty, \quad \mbox{otherwise}
\end{array}
\right. . $$

By using the Proposition \ref{prop:two-scale} and by applying the same techniques as in \cite{Horneuvel12} we obtain the following result. As standard in $\Gamma$-convergence together with compactness Lemma \ref{lemastrong} it implies the desired convergence of minimizers.
\begin{theorem} Assume that for some $C>0$ we have
$$\liminf_{h \to 0} \frac{1}{h^2}I^{h,\eps_1(h),\eps_2(h)}(u^h) \leq C. $$ Then
\begin{enumerate}[(i)]
\item (Lower bound). If $(u^h)_{h>0}$ is a sequence with $u^h - \fint_{\Omega} u^h \to u$ in $L^2(\Omega;\mathbb{R}^3)$ then
$$\liminf_{h \to 0} \frac{1}{h^2}I^{h,\eps_1(h),\eps_2(h)}(u^h) \geq I_0^p(u).$$

\item (Upper bound). For every $u \in W^{2,2}_{iso}(S,\mathbb{R}^3)$ there is a sequence $(u^h)_{h>0}$ with $u^h - \fint_{\Omega} u^h \to u$ in $L^2(\Omega;\mathbb{R}^3)$ such that
$$\lim_{h \to 0} \frac{1}{h^2}I^{h,\eps_1(h),\eps_2(h)}(u^h) = I_0^p(u)$$
\end{enumerate}

\end{theorem}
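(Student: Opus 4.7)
The plan is to adapt the lower-bound scheme of Theorem \ref{theorem:low} and its upper-bound counterpart, replacing the single weak $L^2$-limits by two-scale limits whose structure is dictated by Proposition \ref{prop:two-scale}. The two regimes are treated in parallel; the only thing that changes between them is the functional class of correctors produced by Proposition \ref{prop:two-scale}, and these are exactly the classes over which $Q_0^p$ is minimized in Definition \ref{defq}.

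For the lower bound, I would extract the rotations $R^h,\tilde R^h$ from Lemma \ref{lemaconv}, decompose $u^h=\bar u^h+hx_3\tilde R^h e_3+h z^h$, and introduce the approximate strain $G^h=B^h+x_3 A^h+(R^h)^t\nabla_h z^h$ with $A^h,B^h$ as in \eqref{defah}--\eqref{defbh}. Using the truncation $\chi_k=\mathbf{1}_{\{|G^h|\le h^{-1/2}\}}$ together with \eqref{assumQh1} and frame indifference, the energy is bounded below by
$$\liminf_{h\to 0}\int_{\Omega}\chi_k\, Q\!\left(\tfrac{x'}{\eps_1(h)},\tfrac{x_3}{\eps_2(h)},G^h\right)dx.$$
Lemma \ref{drugavazna} combined with the $M^h$-smoothing from Theorem \ref{theorem:low} lets me replace $z^h$ by an equi-integrable $\tilde z^h$. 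I then apply Proposition \ref{prop:two-scale} to $\tilde z^h$: this identifies the two-scale limit of $\nabla_h\tilde z^h$ with exactly the admissible corrector $\nabla_y\phi$ in regime 1 (plus a transverse $d(x_3)$-term) or $(\nabla_{y'}\phi\mid\partial_{x_3}\phi)+(0\mid 0\mid d)$ in regime 2. Two-scale lower semicontinuity of the convex integrand $Q$, combined with the weak limits $B^h\rightharpoonup B'$ and $A^h\rightharpoonup A$ as in Theorem \ref{theorem:low}, produces a liminf of the form $\iint_{\Omega\times Y^3}Q(y,\iota(B+x_3 II)+\text{correctors})\,dy\,dx$; minimizing over all admissible correctors, and observing that the $B$-contribution is orthogonal to $x_3 II$, collapses this to $\int_{S}Q_0^p(II(x'))\,dx'=I_0^p(u)$.

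For the upper bound, by Lemma \ref{lemma:density} I may take $y\in\mathcal{A}(S)$ and fix a smooth $B$ vanishing near $\{II=0\}$ together with the corresponding $\alpha,g$ from \eqref{eq:zzadnje}. I then set
$$u^h(x)=y(x')+h\bigl((x_3+\alpha)n+(g\cdot\nabla')y\bigr)+h^2 D^h(x',x_3)+h\,\Phi^h(x),$$
where $D^h$ is the transverse Kirchhoff--Love corrector used in the unhomogenized upper bound and $\Phi^h(x)=R(x')\phi\bigl(x,\tfrac{x'}{\eps_1(h)},\tfrac{x_3}{\eps_2(h)}\bigr)$, with $\phi$ a smooth function approximating a near-optimizer in Definition \ref{defq} evaluated at $A=II(x')$ and $B=B(x')$; the prefactor of $\Phi^h$ differs between the two regimes so that, read through Proposition \ref{prop:two-scale} in reverse, the strain generates exactly the correct two-scale limit. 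Computing $\sym G^h$ produces $\iota(B+x_3 II)+(\text{corrector terms matching the integrand of }Q_0^p)+o(1)$; \eqref{assumQh1} reduces the energy to $\int Q(\cdot,\sym G^h)$, the boundary layers of width $\delta$ around each cell boundary contribute a vanishing $O(|S^{\eps}_{\delta}|)$, and an Attouch diagonalization in $h$, the smoothing parameter $\delta$, the cell-size $\eps$, and the approximation of the optimal $B_{\min}(x')$ by smooth $B_k$ vanishing near $\{II=0\}$ closes the argument.

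\emph{Main obstacle.} The delicate point is the lower bound: one must certify that the two-scale weak limit of $\nabla_h\tilde z^h$ has \emph{precisely} the column-by-column structure prescribed by Proposition \ref{prop:two-scale} — in regime 2, the first two columns are $y_3$-independent while the third column carries a mean-zero $y_3$-fluctuation — so that after two-scale lower semicontinuity the resulting integral is bounded below exactly by the relaxation formula in Definition \ref{defq}. This matching of function classes between the compactness statement and the relaxation formula is what produces the regime-dependent shape of $Q_0^p$ and, symmetrically, forces the regime-dependent scaling of the periodic corrector in the upper-bound ansatz.
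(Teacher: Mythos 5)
Your proposal takes essentially the same route as the paper: extract rotations via Lemma \ref{lemaconv}, decompose $u^h$ as in \eqref{seq}, form the approximate strain $G^h=B^h+x_3A^h+(R^h)^t\nabla_hz^h$, use the truncation/equi-integrability machinery (Lemma \ref{drugavazna} plus the $M^h$-smoothing) together with Proposition \ref{prop:two-scale} to identify the two-scale limit of $\sym G^h$, then invoke two-scale lower semicontinuity of the convex quadratic integrand for the lower bound and the density of $\mathcal{A}(S)$ with an oscillating corrector ansatz and Attouch diagonalization for the upper bound. The paper itself only sketches this, outsourcing the strain identification and both bounds to the analogous steps in \cite{Horneuvel12}; you reconstruct those steps, which is exactly what the sketch intends. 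One small notational caveat: the oscillating corrector in your recovery sequence should appear at scale $h\,\eps_1(h)$ (so that $\nabla_{y'}\phi$ enters the approximate strain at order one), not at scale $h$ as written in your display — but you explicitly note the regime-dependent prefactor must be chosen to match Proposition \ref{prop:two-scale}, so this is a typographical slip rather than a conceptual gap.
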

\begin{proof}
We will just give the sketch of the proof since the main indegredient comparing to the results in \cite{Horneuvel12} is Proposition \ref{prop:two-scale}.
We again define the sequence $z^{h}$ as in (\ref{seq}).  Next we identify the approximate strain
$$G^{h}=B^{h}+ x_3 A^{h} + (R^{h})^t \nabla_{h} z^{h},$$
where $B^{h} \in L^2(S;\R^{3\times 2})$ and $A^h \in L^2(S;\R^{3\times 2})$ are given by the expressions (\ref{defah}) and (\ref{defbh}).
Since $B^h$ and $A^h$ depend only on $x'$  their two-scale limit depends only on variables $x'$ and $y'$.
Using the calculations in \cite[Proposition 3.2]{Horneuvel12} as well as Proposition \ref{prop:two-scale}  we conclude that on every converging subsequence (not relabeled) is valid
\begin{enumerate}
\item in the case $\eps_1(h)=h^{\alpha+1}, \eps_2(h)=h^{\alpha}$  there exists $B \in L^2(S;S_2)$, $d \in L^2(\Omega;\R^3)$,  $\phi \in L^2(\Omega; H^1(\mathcal{Y}^3;\R^3)$ such that
    $$ \sym G^h \stackrel{2}{\rightharpoonup} \iota(B+x_3II)+(0\vert0\vert d)+\nabla_y \phi.$$
\item in the case $\lim_{h \to 0} \frac{\eps_1(h)}{h}=1$, $\lim_{h \to 0}\eps_2(h)=0$   there exists $B \in L^2(S;S_2)$, $\phi \in L^2(\Omega;H^1(I \times \mathcal{Y}^2;\R^3))$, $d \in L^2(\Omega \times Y^3;\R^3)$ such that $\int_{Y^1} d(x,y',y_3) dy_3=0$ for a.e. $(x,y') \in \Omega \times Y^2$ and
$$ \sym G^h \stackrel{2}{\rightharpoonup}   \iota(B+x_3II)+\left(\nabla'_y \phi\vert  \partial_{x_3} \phi \right) + (0\vert0\vert d) .$$
    \end{enumerate}

 Using lower semicontinuity of the convex functionals (\cite{Visintin-07}, Proposition 1.3)  we can prove the lower bound following the proof of \cite[Theorem 2.4]{Horneuvel12}. The upper bound is obtained directly by using again the density  of $\mathcal{A}(S)$ as in \cite{Horneuvel12}.

\end{proof}
\begin{remark} \label{napomena2} If we analyze the relaxation formula in Definition \ref{defq} we see that in the first case the effective behavior is obtained by minimizing with respect to  the gradient of $\phi \in H^1(\mathcal{Y}^3;\R^3)$ which carries the in-plane oscillations coupled together with the oscillations in $x_3$ direction. Thus we can conclude that in this case we have strong coupled homogenization effects. In the second case we firstly minimize with respect to the oscillations in $x_3$ direction and then with respect to the in-plane oscillations coupled with macroscopic behavior in $x_3$ direction.
Although there is no coupling in the relaxation field between in-plane oscillations and oscillations in $x_3$ direction, homogenization effects still exist as we will see in the next example (in fact relaxing firstly with respect to the oscillations in $x_3$ direction influences the energy density which then has an influence on the macroscopic behavior in $x_3$ direction which is coupled with in-plane oscillations).
\end{remark}
\begin{example}
We assume that we are in the regime $\lim_{h \to 0} \frac{\eps_1(h)}{h}=1$, $\lim_{h \to 0}\eps_2(h)=0$ and that the quadratic form $Q$ is isotropic and the Poisson's ratio equals to zero. We assume the following form
$$ Q(A)=\lambda_1(y',x_3) \lambda_2 (y_3) |\sym A|^2,$$
where $\lambda_1, \lambda_2$ are positive functions.
After a short computation we obtain
\begin{eqnarray*}
&&\inf_{d} \int_{Y^1}  \left|\sym \left( \iota(B+x_3 A) + \left(\nabla'_y \phi\vert  \partial_{x_3} \phi \right) + (0\vert0\vert d) \right)\right|^2 dy_3= \\
&& \hspace{+5ex}\lambda_1(y',x_3) \langle \lambda_2 \rangle \left|\sym \left( \iota(B+x_3 A) +\nabla'_y (\phi_1,\phi_2) \right) \right|^2 \\ && \hspace{+5ex}+
\lambda_1(y',x_3) \frac{1}{\langle  1 / \lambda_2 \rangle} \left(\tfrac{1}{2} (\partial_{y_1} \phi_3+
\partial_{x_3} \phi_1)^2+\tfrac{1}{2} (\partial_{y_2} \phi_3+
\partial_{x_3} \phi_2)^2+(\partial_{x_3} \phi_3)^2\right),
\end{eqnarray*}
where the infimum is taken over $d \in L^2(Y^1;\R^3)$ such that $\int_{Y^1} d=0$ and  we have denoted by $\langle \cdot \rangle$ the integral of the function under the brackets over the interval $Y^1$. Further  calculation (taking the integral over $I \times Y^2$ and minimizing with respect to $B$ and $\phi$) is not easy, but even in this form we see the homogenization effects.
\end{example}


{\bf Acknowledgement.} This work has been fully supported by Croatian Science Foundation grant number 9477.

\bibliographystyle{alpha}

\end{document}